\newtheorem{theorem}{Theorem}[section]
\newtheorem{corollary}[theorem]{Corollary}
\newtheorem{lemma}[theorem]{Lemma}
\newtheorem{proposition}[theorem]{Proposition}
\newtheorem{Definition}[theorem]{Definition}
\newtheorem{Example}[theorem]{Example}
\newtheorem{Remark}[theorem]{Remark}
\numberwithin{equation}{section}
\newenvironment{remark}{\begin{Remark}\begin{em}}{\end{em}\end{Remark}}
\newenvironment{example}{\begin{Example}\begin{em}}{\end{em}\end{Example}}
\newcommand{\Pro}{\mathcal P}
\newcommand{\0}{\mathbf{0}}
\def\bP{\mathbb{P}}
\def\cP{\mathcal{P}}
\def\<{\langle}
\def\>{\rangle}
\def\bM{\mathbb{M}}
\def\bR{\mathbb{R}}
\def\diag{\mathrm{diag}}
\def\d{\widehat{d}\,}
\def\<{\langle}
\def\>{\rangle}
\def\Sp{\mathrm{Sp}}
\def\preclog{\underset{\log}\prec}
\DeclareMathOperator{\tr}{tr}
\def\argmin{\mathop{\mathrm{arg\,min}}}
\begin{document}

\title[Eigenvalues  of Cartan barycenter]
{Log-majorizations for the (symplectic) eigenvalues of the Cartan
barycenter}
\author[Hiai and Lim]{Fumio Hiai and Yongdo Lim}
\address{Tohoku University (Emeritus), Hakusan 3-8-16-303, Abiko 270-1154, Japan}
\email{fumio.hiai@gmail.com}
\address{Department of Mathematics, Sungkyunkwan University, Suwon 440-746, Korea}
\email{ylim@skku.edu}
\date{\today}
\maketitle

\begin{abstract} In this paper we show that the
eigenvalue map and the symplectic eigenvalue map of positive definite matrices
are Lipschitz for the Cartan-Hadamard Riemannian  metric, and establish
log-majorizations for the (symplectic) eigenvalues of the Cartan
barycenter of integrable probability Borel measures. This leads a
version of Jensen's inequality for geometric integrals of
matrix-valued integrable  random variables.
\end{abstract}

\medskip
\noindent \textit{2010 Mathematics Subject Classification}. 15A42,
47A64, 47B65, 47L07

\noindent \textit{Key words and phrases.} Positive definite matrix,
Riemannian trace metric, Cartan barycenter,  probability measure,
(symplectic) eigenvalue, log-majorization.

\section{Introduction}

Let ${\Bbb S}_{n}$ be  the Euclidean space of $n\times n$ real
symmetric matrices equipped with the trace inner product $\langle
X,Y\rangle={\mathrm{tr}}(XY)$. Let ${\Bbb P}_n\subset {\Bbb S}_{n}$
be the open convex cone of real  positive definite matrices, which
is a smooth Riemannian manifold with the Riemannian trace metric
$\<X,Y\>_A=\tr A^{-1}XA^{-1}Y,$ where $A\in {\Bbb P}_{n}$ and
$X,Y\in {\Bbb S}_{n}$. This is an important example of
Cartan-Hadamard manifolds, simply connected complete Riemannian
manifolds with non-positive sectional curvature (the canonical
$2$-tensor is non-negative). The Riemannian distance between
$A,B\in\bP_n$ with respect to the above metric is given by $
\delta(A,B)=\|\log A^{-1/2}BA^{-1/2}\|_2,$ where $\|X\|_2=(\tr
X^2)^{1/2}$ for $X\in{\Bbb S}_n$.


One of recent active research topics on this Riemannian manifold
${\Bbb P}_n$ is the Cartan mean (alternatively the Riemannian mean,
the Karcher mean)
$$G(A_{1},\dots,A_{m}):=\underset{X\in {\Bbb P}}{\argmin}
\sum_{j=1}^{m}\delta^{2} (A_{j}, X),$$ where the minimizer exits
uniquely.  This  is a multivariate extension of the two-variable
geometric mean $A\#B:=A^{1/2}(A^{-1/2}BA^{-1/2})^{1/2}A^{1/2},$
which is the unique midpoint between $A$ and $B$ for the Riemannian
trace metric, and it  retains most of its attractive properties; for
instances, joint homogeneity, monotonicity, joint concavity, and the
arithmetic-geometric-harmonic mean inequalities. It also extends the
multivariate geometric mean on ${\Bbb R}_{+}^n\subset {\Bbb P}_{n}$,
where ${\Bbb R}_+=(0,\infty)$, via the embedding into diagonal
matrices, $(a_{1},\dots,a_{n})\mapsto
{\mathrm{diag}}(a_{1},\dots,a_{n})$.

The Cartan mean extends uniquely to a contractive (with respect to
the Wasserstein metric) barycentric map on the Wasserstein space  of
$L^1$-probability measures; $$G:{\mathcal P}^1({\Bbb P}_{n})\to
{\Bbb P}_{n},$$ where a probability Borel measure $\mu$ belongs to
$\cP^1(\bP_n)$ if $\int_{\bP_n}\delta(A,X)\,d\mu(A)<\infty$ for some
$X\in {\bP}_{n}$. The Cartan barycenter plays a fundamental role in
the theory of integrations (random variables, expectations and
variances). Let $(\Omega,{\bf P})$ be a probability space and let
$L^1(\Omega; {\Bbb P}_{n})$ be the space of measurable functions
$\varphi: \Omega\to {\Bbb P}_{n}$ such that
$\int_{\Omega}\delta(\varphi(\omega),X)\,d{\bf P}(\omega)<\infty$
for some $X\in {\Bbb P}_{n}$. Then the ``geometric" integral of
$\varphi\in L^1(\Omega;{\Bbb P}_{n})$ is naturally defined as
$$\int_{\Omega}^{(G)}\varphi(\omega)\,d{\bf P}(\omega):=G(\varphi_*{\bf P}).$$
Here, we use the notation $\int_\Omega^{(G)}$ to avoid the confusion with
the usual $\int_\Omega$ in the Euclidean (or arithmetic) sense, that
is, $\int_\Omega\varphi(\omega)\,d{\bf
P}(\omega)=\mathcal{A}(\varphi_*{\bf P})$, where ${\mathcal
A}:{\mathcal P}^{\infty}({\Bbb P}_{n})\to {\Bbb P}_{n}$ is the
arithmetic barycenter on the space of bounded probability measures.

In this paper we consider the eigenvalue mapping on ${\Bbb P}_{n}$
$$\lambda:{\Bbb P}_{n}\to {\Bbb R}_{+}^{n}, \ \
\lambda(A)=(\lambda_{1}(A),\dots,\lambda_{n}(A))$$ ordered as
$\lambda_{1}(A) \geq \cdots \geq \lambda_{n}(A)$ counting
multiplicities, and  the \emph{extended symplectic eigenvalue map}
on ${\Bbb P}_{2n}$ \begin{eqnarray}\label{E:seig}\d: {\Bbb
P}_{2n}\to {\Bbb R}_{+}^{2n}, \ \ \
\d(A)=(\d_1(A),\d_2(A),\dots,\d_{2n}(A)).\end{eqnarray} The symplectic
eigenvalues play an important role in classical Hamiltonian
dynamics, in quantum mechanics, in symplectic topology, and in the
more recent subject of quantum information; see, e.g., \cite{Et,Pa}.
For every $A\in\bP_{2n}$, Williamson's theorem (see \cite{ADMS,Pa})
says that there exist a unique diagonal matrix
$D=\diag(d_1,\dots,d_n)$ with $0<d_1\le\dots\le d_n$ and an
$M\in\Sp(2n,\bR),$ the symplectic Lie group, such that $$
A=M^T\begin{bmatrix}D&0\\0&D\end{bmatrix}M.$$ Then,
$d(A)=(d_1(A),\dots,d_n(A)):=(d_1,\dots,d_n)$ is called the {\it
symplectic eigenvalues} of $A$. The  \emph{extended symplectic
eigenvalues} $\d(A)$ of $A$ is defined by
 $$ \d_1(A)=\d_2(A)=d_n,\ \dots\,,\ \
\d_{2n-1}(A)=\d_{2n}(A)=d_1.$$

Our main theorem of the present paper is the following
log-majorizations of the (symplectic) eigenvalues of the Cartan
barycenter.
\begin{theorem}\label{T:main} The maps $\lambda$ and $\d$ are Lipschitz for
the Riemannian trace metric. Moreover,
$$\lambda\left(\int_{\Omega}^{(G)}\varphi(\omega)\,d{\bf P}(\omega)\right)\underset{\log}{\prec}
\int_{\Omega}^{(G)}\lambda(\varphi(\omega))\,d{\bf P}(\omega), \ \ \
\varphi\in L^1(\Omega; {\Bbb P}_{n})$$ and
$$\d\left(\int_{\Omega}^{(G)}\varphi(\omega)\,d{\bf P}(\omega)\right)\underset{\log}{\prec}\int_{\Omega}^{(G)}\d(\varphi(\omega))\,d{\bf P}(\omega),\
\ \ \ \varphi\in L^1(\Omega; {\Bbb P}_{2n}).$$
\end{theorem}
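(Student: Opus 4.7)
I would split the proof into the Lipschitz claims and the log-majorizations, obtaining the latter by first treating the finitely supported case and then approximating general probability measures by empirical ones.

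\textbf{Lipschitz continuity of $\lambda$ and $\d$.} For $\lambda$ I would combine two classical ingredients. The Lidskii--Wielandt majorization applied to the symmetric matrices $\log A,\log B$ gives
\[
\|\log\lambda(A)-\log\lambda(B)\|_2\le\|\log A-\log B\|_2,
\]
and the exponential-metric-increasing property of the Cartan--Hadamard manifold $\bP_n$ gives $\|X-Y\|_2\le\delta(e^X,e^Y)$. Composing these yields nonexpansiveness of $\lambda$ once $\bR_+^n$ is endowed with the Riemannian trace metric via its diagonal embedding into $\bP_n$. For $\d$ I would run the analogous argument using a symplectic Lidskii-type inequality together with Williamson's normal form to dominate $\d$ by $\log A^{-1/2}BA^{-1/2}$; the pairing structure of the extended symplectic eigenvalues is what makes the symmetrization in the comparison clean.

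\textbf{Discrete log-majorization.} Next I would prove, for $A_1,\dots,A_m\in\bP_n$,
\[
\lambda\bigl(G(A_1,\dots,A_m)\bigr)\preclog\Bigl(\prod_{i=1}^m\lambda(A_i)\Bigr)^{1/m},
\]
with componentwise product of sorted eigenvalue vectors on the right. The base case $m=2$ is Ando's log-majorization $\lambda(A\#B)\preclog\lambda(A)^{1/2}\lambda(B)^{1/2}$. For general $m$ I would realize $G(A_1,\dots,A_m)$ as the limit of an inductive (Holbrook--Sturm type) averaging built from two-variable weighted geometric means, propagate the log-majorization through each two-variable step, and pass to the limit using the fact that log-majorization is closed under componentwise convergence in $\bR_+^n$. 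An analogous argument treats $\d$, once one first establishes a symplectic version of Ando's two-variable log-majorization from Williamson's form.

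\textbf{Passage to probability measures and main obstacle.} Given $\varphi\in L^1(\Omega;\bP_n)$, I would approximate $\mu:=\varphi_*\mathbf{P}$ by finitely supported measures $\mu_N$ converging to $\mu$ in the Wasserstein distance $W_1$, which is possible by the integrability assumption on $\varphi$. The Wasserstein contractivity of the Cartan barycenter yields $G(\mu_N)\to G(\mu)$ in $\delta$, and the already established Lipschitz continuity of $\lambda$ yields both $\lambda(G(\mu_N))\to\lambda(G(\mu))$ and $\lambda_*\mu_N\to\lambda_*\mu$ in $W_1$ on $\bR_+^n$, hence also $G(\lambda_*\mu_N)\to G(\lambda_*\mu)$. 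Since log-majorization passes to coordinatewise limits, the discrete inequality implies the measure-theoretic one, and the symplectic statement follows by the same argument with $\d$ replacing $\lambda$. The hardest step I anticipate is the discrete log-majorization for $m\ge 3$: because the Cartan mean is not given by a closed algebraic expression, one must propagate log-majorization along the inductive construction rather than apply Ando's inequality once, and the symplectic counterpart requires first proving a new two-variable log-majorization for $\d$ out of Williamson's theorem, which is where most of the novel work will lie.
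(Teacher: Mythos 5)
Your treatment of $\lambda$ is essentially the paper's: Lidskii--Wielandt plus the exponential metric increasing property gives $\delta$-nonexpansiveness (Proposition~\ref{P:cont}), and the density step at the end is also the paper's mechanism. But there are two genuine gaps on the symplectic side, and one organizational difference worth noting on the eigenvalue side.

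First, the Lipschitz claim for $\d$. You propose running "the analogous argument using a symplectic Lidskii-type inequality" to "dominate $\d$ by $\log A^{-1/2}BA^{-1/2}$." No such Lidskii-type theorem for symplectic eigenvalues is quoted or proved, and an argument along these lines would deliver $\delta$-nonexpansiveness of $\d$, which is stronger than what the paper obtains (the paper gets Lipschitz constant $\sqrt{2n}$, not $1$). The paper's route is entirely different: it first derives the identity $\d(A)=\lambda^{1/2}(A^{1/2}J^TAJA^{1/2})$, uses it to show that $\d$ is monotone in the Loewner order, notes $\d$ is homogeneous of degree~$1$, and then invokes Proposition~\ref{P:main} (monotone + subhomogeneous $\Rightarrow$ $d_T$-contractive) together with the two-sided comparison \eqref{E:comp} between $d_T$ and $\delta$. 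As written, your $\d$-Lipschitz step is not a proof but a conjecture.

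Second, the discrete log-majorization for $\d$. You correctly flag that you need a two-variable "symplectic Ando" inequality $\d(A\#_t B)\preclog\d^{1-t}(A)\,\d^t(B)$ and that this is the main novelty, but you do not supply it. The paper does not reduce to two variables at all: Lemma~\ref{L-4.4} proves the $m$-variable inequality directly from the characterization $\d_1(A)\le\alpha\iff J^TAJ\le\alpha^2A^{-1}$ together with congruence invariance, monotonicity, joint homogeneity and self-duality of $G_w$, and then lifts from $\d_1$ to all $\d_i$ by the antisymmetric-tensor-power trick (Remark~\ref{R-4.5}). Proving the two-variable case already requires the same ingredients, so the inductive-averaging route would not actually save work here, and your sketch leaves it unproven.

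Finally, on the eigenvalue log-majorization: your plan (discrete case via Ando plus a Holbrook--Sturm type inductive propagation, then a density limit) is a legitimate alternative and does produce the last inequality of Theorem~\ref{T:main}. It differs from both the paper's proof and the Bhatia--Karandikar argument. The paper instead proves the stronger chain in Theorem~\ref{T:M} directly at the level of measures, using $G(\mu)\preclog G(\mu^r)^{1/r}$ from \cite{HL}, the Lie--Trotter limit $\lim_{s\to0}G(\mu^s)^{1/s}=\exp\int\log A\,d\mu$, and then Ky Fan majorization (proved on $\cP_0$, extended by density). If you only want the endpoint inequality, your approach is fine, but note that the paper's theorem really asserts the full chain. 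One point to make explicit in your propagation step: you use that if $a\preclog b$ and $c$ is fixed, then $a\cdot c\preclog b\cdot c$; this is valid because all vectors involved are decreasingly ordered so no re-sorting occurs, but it fails without that hypothesis and should be stated.
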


Here $\underset{\log}{\prec}$ denotes the  log-majorization between
positive vectors in $\bR_+^n$; for $a=(a_1,\cdots,a_n)$ and $
b=(b_1,\dots,b_n)$ in $\bR_+^n$ arranged in decreasing order
$a_1\ge\dots\ge a_n$ and $b_1\ge\dots\ge b_n$,
$a\underset{\log}{\prec}b$ if and only if $\prod_{i=1}^{k} a_i \leq
\prod_{i=1}^{k}b_i$ for $1 \leq k \leq n$ and equality holds for
$k=n$. For $A,B\in\bP_n$ we also write $A\underset{\log}{\prec}B$ if
$\lambda(A)\underset{\log}{\prec}\lambda(B)$, which implies that
$||| A|||\leq |||B|||$ for all unitarily invariant norms $|||\cdot
|||$ on the $n\times n$ complex matrices.

The result in the main theorem is a variant of classical Jensen's
inequality for integrals and  covers those of Bhatia and Karandikar
\cite{BK} and of Bhatia and Jain \cite{BJ}:
\begin{align}\label{eigen-log-2}
\lambda(G(A_{1},\dots,A_{m}))&\underset{\log}{\prec}G(\lambda(A_{1}),\dots,\lambda(A_{m}))
\end{align}
 and
\begin{align}\label{eigen-log-3}
\d(G(A_1,\dots,A_m))\underset{\log}{\prec}G(\d(A_{1}),\dots,\d(A_{m})).
\end{align}

\section{(Symplectic) eigenvalue mappings}
The convex cone ${\Bbb P}_{n}$ is, not only a Riemannian manifold
with the Riemannian trace metric, but a Banach Finsler manifold over
${\Bbb S}_n$, the Finsler structure being derived from the operator
norm $\|X\|_{A} := \|A^{-1/2} X A^{-1/2}\|$ for $A\in\bP_n$ and $
X\in{\Bbb S}_n$. The induced metric distance on ${\Bbb P}$ is
explicitly given by $d_T(A,B)=\|\log A^{-1/2}BA^{-1/2}\|$, which is
nothing but the \emph{Thompson metric}
$$
d_T(A,B)={\mathrm{max}}\{\log M(B/A),\log
M(A/B)\},
$$
where $M(B/A) := {\mathrm{inf}}\{\alpha>0: B \leq \alpha
A\}$, the largest eigenvalue of $A^{-1/2}BA^{-1/2}$.
 The geometric mean curve $t\mapsto
A\#_{t}B:= A^{1/2}(A^{-1/2}BA^{-1/2})^{t}A^{1/2}$ is a minimal
geodesic from $A$ to $B$ for the Thompson metric; see \cite{Nu,CPR}.
We observe that
\begin{eqnarray}\label{E:comp}d_T(A,B)\leq \delta(A,B)\leq \sqrt{n}\,d_T(A,B),
\end{eqnarray}
where $\delta(A,B)=\|\log A^{-1/2}BA^{-1/2}\|_2$ is the Riemannian
distance.

Let  $\cP^1(\bP_n)$ be the set of integrable probability Borel
measures on ${\Bbb P}_{n}$, i.e., probability Borel measures $\mu$
on $\bP_n$ such that $\int_{\bP_n}\delta(A,X)\,d\mu(A)<\infty$ for
some $X\in {\Bbb P}_{n}$. By (\ref{E:comp}), the Thompson metric
leads the same probability measure space ${\mathcal P}^{1}({\Bbb
P}_{n})$. That is, for a probability Borel measure $\mu$ on $\bP_n$,
$\int_{\bP_n}\delta(A,X)\,d\mu(A)<\infty$ if and only if
$\int_{\bP_n}d_T(A,X)\,d\mu(A)<\infty$. The \emph{Wasserstein
metric} $\delta^W$  on $\mathcal{P}^{1}({\Bbb P})$ is defined by
$$
\delta^W(\mu,\nu) := \inf_{\pi \in \Pi(\mu,\nu)} \int_{{\Bbb P}_n
\times {\Bbb P}_n} \delta(X,Y) \, d\pi(X,Y),
$$
where $\Pi(\mu,\nu)$ is the set of all couplings for $\mu$ and
$\nu$. Similarly we have the Wasserstein distance  $d_T^W$ from the
Thompson metric. Both are complete metrics on $\mathcal{P}^{1}({\Bbb
P}_n)$ but they are quite distinctive.

For a general metric space $(X,d)$ one can define $\Pro^1(X)$ to be
the set of integrable probability Borel measures whose support has
measure $1$, and the Wasserstein metric $d^W$ on $\Pro^1(X)$ as
above. Then the following result appears in \cite{LL5}.

\begin{lemma}\label{L:Lip}
Let $f:X\to Y$ be a Lipschitz map between complete metric spaces with
Lipschitz constant $C$.  Then $f_*:\mathcal{P}^1(X)\to \mathcal{P}^1(Y)$ is
$d^W$-Lipschitz with Lipschitz constant $C$.
\end{lemma}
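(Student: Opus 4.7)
The plan is to use the standard pushforward construction on couplings. First I would verify that $f_*$ indeed maps $\cP^1(X)$ into $\cP^1(Y)$: for $\mu\in\cP^1(X)$, pick any $x_0\in X$ and $y_0\in Y$, and estimate
$$
\int_Y d_Y(y,y_0)\,d(f_*\mu)(y)=\int_X d_Y(f(x),y_0)\,d\mu(x)\le C\int_X d_X(x,x_0)\,d\mu(x)+d_Y(f(x_0),y_0),
$$
using the Lipschitz bound and the triangle inequality. Since $\mu\in\cP^1(X)$, the right-hand side is finite, so $f_*\mu\in\cP^1(Y)$.

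For the Lipschitz estimate, the key observation is that couplings push forward to couplings. Given $\mu,\nu\in\cP^1(X)$ and any $\pi\in\Pi(\mu,\nu)$, the pushforward $(f\times f)_*\pi$ on $Y\times Y$ has marginals $f_*\mu$ and $f_*\nu$, hence lies in $\Pi(f_*\mu,f_*\nu)$. Then by the change-of-variables formula and the Lipschitz hypothesis,
$$
\int_{Y\times Y}d_Y(y_1,y_2)\,d\bigl((f\times f)_*\pi\bigr)(y_1,y_2)=\int_{X\times X}d_Y(f(x_1),f(x_2))\,d\pi(x_1,x_2)\le C\int_{X\times X}d_X(x_1,x_2)\,d\pi(x_1,x_2).
$$
Since the left-hand side is an upper bound for $d^W(f_*\mu,f_*\nu)$, taking the infimum over $\pi\in\Pi(\mu,\nu)$ on the right yields $d^W(f_*\mu,f_*\nu)\le C\,d^W(\mu,\nu)$.

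There is no real obstacle here; the only minor point to be careful about is measurability, namely that $f\times f$ is Borel (which follows from continuity of $f$, itself a consequence of the Lipschitz property) so that the pushforward $(f\times f)_*\pi$ is a well-defined Borel probability measure on $Y\times Y$, and that its marginals are correctly identified with $f_*\mu$ and $f_*\nu$ by the universal property of pushforward under the projection maps.
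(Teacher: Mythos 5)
The paper does not actually prove this lemma; it simply cites \cite{LL5} (Lawson--Lim, \emph{Contractive barycentric maps}). Your argument is the standard and expected one, and it is correct: couplings push forward to couplings under $f\times f$, the change-of-variables formula and the Lipschitz bound on $d_Y(f(x_1),f(x_2))$ give the $C$-factor estimate, and taking the infimum over $\pi\in\Pi(\mu,\nu)$ yields $d^W(f_*\mu,f_*\nu)\le C\,d^W(\mu,\nu)$. Your measurability remark (continuity of $f$, hence of $f\times f$, makes the pushforwards well-defined Borel measures with the correct marginals) is the right thing to note.

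One small point you passed over: in this paper, $\mathcal{P}^1(X)$ is defined as the set of integrable Borel probability measures \emph{whose support has full measure}. Your verification that $f_*\mu\in\mathcal{P}^1(Y)$ only checks the integrability condition, not that $\mathrm{supp}(f_*\mu)$ has $f_*\mu$-measure $1$. This does hold (since $\mu(\mathrm{supp}\,\mu)=1$ forces $\mathrm{supp}\,\mu$ to be separable, its continuous image $\overline{f(\mathrm{supp}\,\mu)}$ is a closed separable set of full $f_*\mu$-measure, and a Borel probability measure concentrated on a separable set has full support measure), but it is a separate condition and should be mentioned for completeness.
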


Note that if $f:{\Bbb P}_{n}\to {\Bbb P}_{N}$ is a $d_T$-Lipschitz
map with Lipschitz constant $C$, then it is $\delta$-Lipschitz map
with Lipschitz constant $\sqrt{N}C$ by \eqref{E:comp}. It turns out that
the Thompson metric is very useful in studying (sub)homogeneous and monotonic
mappings. A mapping $f:{\Bbb P}_{n}\to {\Bbb P}_{N}$ is said to be
monotonic if $A\leq B$ implies $f(A)\leq f(B)$, and  $f$ is
subhomogeneous of degree $r>0$ if $f(tA)\leq t^{r}f(A)$ for all
$t\geq 1$ and $A\in {\Bbb P}_{n}$.

\begin{proposition}\label{P:main} Let  $f:{\Bbb P}_{n}\to {\Bbb
P}_{N}$  be  monotonic and  subhomogeneous of degree $r$, then it is $d_T$-Lipschitz with
Lipschitz constant $r$.
\end{proposition}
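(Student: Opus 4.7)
The plan is to exploit the order-theoretic characterization of the Thompson metric: $d_T(A,B)$ is the smallest $s\ge 0$ for which $e^{-s}A\le B\le e^{s}A$. Setting $s:=d_T(A,B)$, this sandwich holds, and everything else in the proof is a matter of pushing those inequalities through $f$.

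First, I would apply monotonicity of $f$ to the sandwich $e^{-s}A\le B\le e^{s}A$, obtaining
\[
f(e^{-s}A)\le f(B)\le f(e^{s}A).
\]
Next, I would rewrite the outer terms using subhomogeneity. The upper bound is direct: since $e^{s}\ge 1$, subhomogeneity of degree $r$ gives $f(e^{s}A)\le e^{rs}f(A)$. The lower bound requires a small twist because subhomogeneity is only asserted for scalars $\ge 1$: I would observe $A=e^{s}(e^{-s}A)$ and apply subhomogeneity with $t=e^{s}$ to the point $e^{-s}A$, yielding $f(A)\le e^{rs}f(e^{-s}A)$, i.e.\ $e^{-rs}f(A)\le f(e^{-s}A)$.

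Combining these two bounds produces
\[
e^{-rs}f(A)\le f(B)\le e^{rs}f(A),
\]
so by the definition of $M(\cdot/\cdot)$ both $M(f(B)/f(A))\le e^{rs}$ and $M(f(A)/f(B))\le e^{rs}$. Therefore
\[
d_T(f(A),f(B))=\max\{\log M(f(B)/f(A)),\,\log M(f(A)/f(B))\}\le rs=r\,d_T(A,B),
\]
which is the desired Lipschitz estimate.

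There is no real obstacle here; the only mildly subtle point is the one-sided nature of subhomogeneity, which is handled by the rewriting $A=e^{s}(e^{-s}A)$. Everything else is a direct unpacking of the Thompson metric via the cone order on $\bP_n$.
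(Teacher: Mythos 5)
Your proof is correct and follows essentially the same line as the paper: unpack $d_T$ via the cone order on $\bP_n$, then push the resulting inequalities through monotonicity and subhomogeneity. The paper's version avoids your ``twist'' by starting from the symmetric pair $A\le e^{\alpha}B$ and $B\le e^{\alpha}A$ (with $\alpha=d_T(A,B)$), so that subhomogeneity is only ever applied with scalar $e^{\alpha}\ge1$ and the two bounds $f(A)\le e^{r\alpha}f(B)$, $f(B)\le e^{r\alpha}f(A)$ drop out directly---a cosmetic simplification, not a different argument.
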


\begin{proof}  Let $A,B>0$ and let  $\alpha=d(A,B)$. Then
$A\leq e^{\alpha}B$ and $B\leq e^{\alpha}A$ by definition of the
Thompson metric. Using monotonicity and subhomogeneity of degree $r>0$, we have
$$f(A)\leq f( e^{\alpha}B)\leq e^{r\alpha}f(B) \ \ {\mathrm{and}}\ \ f(B)\leq
f(e^{\alpha}A)\leq e^{r\alpha}f(A)$$ and hence $d_T(f(A),f(B))\leq
r\alpha=rd_T(A,B)$.
\end{proof}

\begin{example}\label{E:eig}  One can see that the eigenvalue map
$\lambda:{\Bbb P}_{n}\to {\Bbb R}_+^n$ is monotonic and homogeneous
of degree $1$. Indeed, this holds true for the $j$th eigenvalue
mappings
$$\lambda_{i}:{\Bbb P}_{n}\to {\Bbb R}_{+},  \ \ \ i=1,\dots,n.$$
Hence, by Proposition \ref{P:main} and Lemma \ref{L:Lip}, the
push-forward mappings $\lambda_{*}:{\mathcal P}^1({\Bbb P}_{n})\to
{\mathcal P}^1({\Bbb R}_{+}^n)$ and $(\lambda_i)_*:{\mathcal
P}^1({\Bbb P}_{n})\to {\mathcal P}^1({\Bbb R}_{+})$ are
$d_{T}^W$-Lipschitz with Lipschitz constant $1$. By \eqref{E:comp}
they are also $\delta^W$-Lipschitz map with Lipschitz constant
$\sqrt{n}$ and $1,$ respectively.
\end{example}

In fact, the eigenvalue map is also contractive for the Riemannian trace metric $\delta$.

\begin{proposition}\label{P:cont}
The eigenvalue map $\lambda:{\Bbb P}_{n}\to {\Bbb R}_{+}^n$ is $\delta$-contractive$;$
$$\delta(\lambda(A),\lambda(B))\leq \delta(A,B),  \ \ \ \ A,B\in {\Bbb P}_{n}.$$
Moreover,
$\delta^W(\lambda_{*}\mu, \lambda_*\nu)\leq \delta^W(\mu,\nu)$  for $\mu,\nu\in {\mathcal P}^1({\Bbb P}_{n})$.
\end{proposition}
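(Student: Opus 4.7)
The plan is to prove the pointwise contraction $\delta(\lambda(A),\lambda(B))\le\delta(A,B)$ by chaining three standard estimates, and then obtain the Wasserstein part from Lemma~\ref{L:Lip}. Since the diagonal matrices $\mathrm{diag}(\lambda(A))$ and $\mathrm{diag}(\lambda(B))$ commute, the Riemannian distance between them collapses to
$$\delta(\lambda(A),\lambda(B))^{2}=\sum_{i=1}^{n}\bigl(\log\lambda_{i}(A)-\log\lambda_{i}(B)\bigr)^{2},$$
and the numbers $\log\lambda_{i}(A)$ (resp.\ $\log\lambda_{i}(B)$) are precisely the decreasingly ordered eigenvalues of the self-adjoint matrix $\log A$ (resp.\ $\log B$).

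Next I would invoke the Hoffman--Wielandt (Lidskii--Mirsky) inequality for Hermitian matrices: if $X,Y\in\mathbb{S}_{n}$ have decreasingly ordered eigenvalues $\alpha_{i}$ and $\beta_{i}$, then
$$\sum_{i=1}^{n}(\alpha_{i}-\beta_{i})^{2}\le\|X-Y\|_{2}^{2}.$$
Applied to $X=\log A$ and $Y=\log B$, this produces $\delta(\lambda(A),\lambda(B))^{2}\le\|\log A-\log B\|_{2}^{2}.$

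The third ingredient is the exponential metric increasing (EMI) property of the Cartan--Hadamard manifold $\mathbb{P}_{n}$, namely
$$\|\log A-\log B\|_{2}\le\delta(A,B)=\|\log A^{-1/2}BA^{-1/2}\|_{2}.$$
This expresses the fact that the Riemannian exponential at $I$, which coincides here with the matrix exponential $\exp:\mathbb{S}_{n}\to\mathbb{P}_{n}$, is distance non-decreasing; it is a standard consequence of non-positive sectional curvature, and admits a direct matrix proof via the Daleckii--Krein integral representation of the differential of $\exp$ together with the elementary estimate $\sinh t\ge t$. Chaining the three inequalities delivers the pointwise contraction.

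The Wasserstein part then follows immediately from Lemma~\ref{L:Lip} applied to the now $1$-Lipschitz map $\lambda:(\mathbb{P}_{n},\delta)\to(\mathbb{R}_{+}^{n},\delta)$. Of the three steps, I expect the EMI estimate to be the only non-routine one, since it genuinely uses the Riemannian geometry of $\mathbb{P}_{n}$; the diagonal reduction and the Hoffman--Wielandt inequality are elementary spectral facts.
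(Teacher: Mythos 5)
Your proof is correct and follows essentially the same route as the paper: rewrite $\delta(\lambda(A),\lambda(B))$ as $\|\lambda(\log A)-\lambda(\log B)\|_2$, apply the Lidskii--Wielandt (Hoffman--Wielandt/Mirsky) eigenvalue inequality to bound it by $\|\log A-\log B\|_2$, invoke the EMI property to get $\leq\delta(A,B)$, and then derive the Wasserstein statement from Lemma~\ref{L:Lip}. The paper gives exactly this one-line chain, so there is no substantive difference beyond your added justification of EMI via the Daleckii--Krein formula.
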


\begin{proof} The first assertion follows from the Lidskii-Wielandt theorem
(see, e.g., \cite{Bh1,Hi}) and the EMI property
(exponential metric increasing property, see \cite{Bh2});
for $A,B\in {\Bbb P}_{n},$
\begin{align*}
\delta(\lambda(A),\lambda(B))&=\|\log \lambda(A)-\log\lambda(B)\|_2=\|\lambda(\log A)-\lambda(\log B)\|_2\\
&\leq\|\log A-\log B\|_2\leq \delta(A,B).
\end{align*}
The latter follows from Lemma \ref{L:Lip}.
\end{proof}

Next, we consider the symplectic eigenvalue map of $2n\times2n$
real positive definite matrices. Let $\bM_{2n}(\bR)$ be the
$2n\times2n$ real matrices and let
$J:=\begin{bmatrix}0&I\\-I&0\end{bmatrix}$ so that $ J^T=J^{-1}=-J.
$ Let $\Sp(2n,\bR)$ denote the group of real {\it symplectic
matrices}, i.e.,
$$
\Sp(2n,\bR):=\{M\in\bM_{2n}(\bR):M^TJM=J\}.
$$

It is straightforward to see that the extended symplectic eigenvalue
mapping (\ref{E:seig})
$$\d:{\Bbb P}_{2n}\to {\Bbb R}_{+}^{2n}$$ is homogeneous of degree $1$. The following
shows that it is monotonic.

\begin{theorem}\label{T:LL} The extended symplectic eigenvalue map $\d$ is monotonic,
i.e., for $A,B\in\bP_{2n}$, $A\le B$ implies $\d(A)\le\d(B)$. Furthermore, for
$A,B\in {\Bbb P}_{2n},$
$$d_{T}(\d(A),\d(B))\leq d_T(A,B) \ \ \ {\mathrm{and}}\ \ \ \delta(\d(A),\d(B))\leq \sqrt{2n}\,\delta(A,B).$$
 \end{theorem}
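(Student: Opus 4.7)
The plan is to reduce everything to monotonicity of $\d$: since $\d$ is manifestly homogeneous of degree one (from Williamson's theorem, $\d(tA)=t\,\d(A)$ for $t>0$), once monotonicity is established, Proposition~\ref{P:main} delivers the Thompson bound $d_T(\d(A),\d(B))\le d_T(A,B)$, and the comparison \eqref{E:comp} then gives $\delta(\d(A),\d(B))\le\sqrt{2n}\,d_T(\d(A),\d(B))\le\sqrt{2n}\,\delta(A,B)$.

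To prove monotonicity I would first establish the spectral identity
\[
\d_k(A)^2 \;=\; \lambda_k\bigl(A\cdot J^TAJ\bigr),\qquad k=1,\dots,2n,
\]
for every $A\in\bP_{2n}$. Starting from a Williamson decomposition $A=M^TD'M$ with $D'=\diag(D,D)$, $D=\diag(d_1,\dots,d_n)$, and $M\in\Sp(2n,\bR)$, I would use the symplectic relations $M^TJM=J=MJM^T$, equivalently $MJ=JM^{-T}$, to push the two $J$'s in $-AJAJ$ past the symplectic factors and arrive at $-AJAJ = M^T\diag(D^2,D^2)M^{-T}$. This matrix is similar to $\diag(D^2,D^2)$, so its spectrum is $\{d_j^2\}_{j=1}^n$ with multiplicities doubled, matching the squared entries of $\d(A)$ in decreasing order.

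With the identity in hand, monotonicity follows from a short chain of Weyl-monotonicity steps together with the standard similarity $\lambda_k(X^{1/2}YX^{1/2})=\lambda_k(XY)$ for $X,Y\ge 0$. Write $\tilde A:=J^TAJ$ and $\tilde B:=J^TBJ$; orthogonal congruence by $J$ preserves positivity, so $A\le B$ implies $\tilde A\le\tilde B$, and
\[
\d_k(A)^2=\lambda_k(A^{1/2}\tilde AA^{1/2})\le\lambda_k(A^{1/2}\tilde BA^{1/2})=\lambda_k(\tilde B^{1/2}A\tilde B^{1/2})\le\lambda_k(\tilde B^{1/2}B\tilde B^{1/2})=\d_k(B)^2,
\]
giving $\d(A)\le\d(B)$ componentwise, which coincides with the matrix inequality between the diagonal embeddings in $\bP_{2n}$. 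The main obstacle I anticipate is the bookkeeping in the spectral identity: the relations $MJM^T=J$ and $MJ=JM^{-T}$ must be applied in the right order to commute the two $J$'s past the symplectic factors in $AJAJ$, after which everything reduces to a similarity and to standard eigenvalue monotonicity, for which the structure already set up in the paper (in particular Proposition~\ref{P:main} and \eqref{E:comp}) does the remaining work automatically.
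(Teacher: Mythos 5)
Your proof is correct and follows essentially the same route as the paper: both establish the spectral identity $\d(A)=\lambda^{1/2}(A^{1/2}J^TAJA^{1/2})$ from a Williamson decomposition, then run the same chain of Weyl monotonicity and eigenvalue-cyclicity steps to get $A\le B\Rightarrow\d(A)\le\d(B)$, and finally invoke Proposition~\ref{P:main} together with \eqref{E:comp}. The only cosmetic difference is in verifying the identity: the paper conjugates $A^{1/2}JA^{1/2}$ by an explicit orthogonal matrix $Q=\diag(D^{1/2},D^{1/2})MA^{-1/2}$ to diagonalize $A^{1/2}(iJ)A^{1/2}$, whereas you push the symplectic factors through to get $-AJAJ=M^T\diag(D^2,D^2)M^{-T}$ directly; both land in the same place.
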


 \begin{proof}
We first show that
\begin{eqnarray}\label{E:seigen}
\d(A)&=\lambda^{1/2}(A^{1/2}J^TAJA^{1/2}), \ \ A\in {\Bbb P}_{2n}.
\end{eqnarray}

Let $A\in {\Bbb P}_{2n}$. By definition of the symplectic eigenvalues of $A$,
there exist a diagonal matrix  $D=\diag(d_1,\dots,d_n)$ with  $0<d_1\le\dots\le d_n$
and an $M\in\Sp(2n,\bR)$ such that
$
A=M^T\begin{bmatrix}D&0\\0&D\end{bmatrix}M.
$
Set
$$
Q:=\begin{bmatrix}D^{1/2}&0\\0&D^{1/2}\end{bmatrix}MA^{-1/2},
$$
which is a $2n\times2n$ orthogonal matrix as
$$
Q^TQ=A^{-1/2}M^T\begin{bmatrix}D&0\\0&D\end{bmatrix}MA^{-1/2}
=A^{-1/2}AA^{-1/2}=I.
$$
Since $M\in\Sp(2n,\bR)$ implies $M^T\in\Sp(2n,\bR)$ and hence $MJM^T=J$, we have
\begin{align*}
QA^{1/2}JA^{1/2}Q^T
&=\begin{bmatrix}D^{1/2}&0\\0&D^{1/2}\end{bmatrix}MJM^T
\begin{bmatrix}D^{1/2}&0\\0&D^{1/2}\end{bmatrix} \\
&=\begin{bmatrix}D^{1/2}&0\\0&D^{1/2}\end{bmatrix}J
\begin{bmatrix}D^{1/2}&0\\0&D^{1/2}\end{bmatrix}
=\begin{bmatrix}0&D\\-D&0\end{bmatrix}.
\end{align*}
This implies that the eigenvalues of the Hermitian $2n\times2n$ matrix
$A^{1/2}(iJ)A^{1/2}$ is given as
$$
\lambda(A^{1/2}(iJ)A^{1/2})=\lambda\Biggl(\begin{bmatrix}0&iD\\-iD&0\end{bmatrix}\Biggr)
=(d_n,\dots,d_1,-d_1,\dots,-d_n).
$$
Therefore,
\begin{align*}
\lambda^{1/2}(A^{1/2}J^TAJA^{1/2})&=\lambda(|A^{1/2}(iJ)A^{1/2}|) \\
&=(d_n,d_n,d_{n-1},d_{n-1},\dots,d_1,d_1)=\d(A).
\end{align*}

Next, let $A,B\in {\Bbb P}_{2n}$ with $A\leq B$. It follows from \eqref{E:seigen} that
\begin{align*}
\d(A)&=\lambda^{1/2}(A^{1/2}J^TAJA^{1/2})\le\lambda^{1/2}(A^{1/2}J^TBJA^{1/2}) \\
&=\lambda^{1/2}(B^{1/2}JAJ^TB^{1/2})\le\lambda^{1/2}(B^{1/2}JBJ^TB^{1/2})=\d(B).
\end{align*}

The remaining part of proof follows from Proposition \ref{P:main} and (\ref{E:comp}).
\end{proof}

By Theorem \ref{T:LL} and Lemma \ref{L:Lip}, the push-forward map
$\d_*:{\mathcal P}^1({\Bbb P}_{2n})\to {\mathcal P}^1({\Bbb
R}_{+}^{2n})$ is $d_{T}^W$-Lipschitz with Lipschitz constant $1$ and
is also $\delta^W$-Lipschitz with Lipschitz constant $\sqrt{2n}$.
Since $\d_i$ is monotonic and hence is $d_T$-Lipschitz,
$(\d_i)_*:\cP^1(\bP_{2n})\to \cP^1({\Bbb R}_+)$ is $d_T^W$-Lipschitz
by Lemma \ref{L:Lip} again.

\section{Cartan barycenters}
 For $\mu\in\cP^1(\bP_n),$ the
{\it Cartan barycenter} $G(\mu)\in\bP_n$ is  defined as the unique minimizer
$$
G(\mu)=\argmin_{Z\in\bP_m}\int_{\bP_m}\bigl[\delta^2(Z,X)-\delta^2(Y,X)\bigr]\,d\mu(X),
$$
independently of the choice of a fixed $Y\in\bP_n$  (see \cite{St}).
Also, the Cartan barycenter is characterized via the {\it Karcher
equation} (the gradient zero equation) \cite{HL} as
\begin{equation}\label{Karcher}
X=G(\mu)\Longleftrightarrow \int_{{\Bbb P}}\log X^{-1/2}AX^{-1/2}\,d\mu(A)=0.
\end{equation}

An important fact called the \emph{fundamental contraction
property} in \cite{St} (also \cite[Theorem 2.3]{HL}) is that the
Cartan barycenter $G:\cP^1(\bP_n)\to {\bP_n}$ is a Lipschitz map
with Lipschitz constant $1$; namely, for every
$\mu,\nu\in\cP^1(\bP_n)$,
\begin{eqnarray}\label{contract}
\delta(G(\mu),G(\nu))\leq \delta^{W}(\mu,\nu).
\end{eqnarray}
This contraction property also  holds for the Thompson metric
\cite{LL5}.

\begin{example}
In the one-dimensional case on $\bP_1=(0,\infty)={\Bbb R}_{+}$, we find by a direct
computation that for every $\mu\in\cP^1({\Bbb R}_{+})$,
$$
G(\mu)=\exp\int_{{\Bbb R}_{+}}\log x\,d\mu(x).
$$
 Similarly, the Cartan barycenter on  the product space
  ${\Bbb R}_{+}^n$ is given by
$$G(\mu)=\exp\int_{{\Bbb R}_{+}^n}\log x\,d\mu(x), \ \ \ \mu\in \cP^1({\Bbb R}_{+}^n).$$
Here, $\log:{\Bbb R}_{+}^n\to {\Bbb R}^n$ is the usual logarithm
componentwise on the product space ${\Bbb R}_{+}^n$. This coincides
with the restriction of the Cartan barycenter $G:\cP^1({\Bbb
P}_{n})\to {\Bbb P}_{n}$ to $\cP^1({\Bbb D}_{n}),$ where ${\Bbb
D}_{n}$ is the set of all diagonal matrices in ${\Bbb P}_{n}$.
\end{example}

We have an explicit formula of $G(\lambda_{*}\mu)$ for  $\mu\in \cP^1({\Bbb P}_{n});$
\begin{align*}
G(\lambda_{*}\mu)&=\exp\int_{{\Bbb R}_{+}^n}\log x\,d(\lambda_{*}\mu)(x)=\exp\int_{{\Bbb P}_{n}}\log \lambda(A)\,d\mu(A)\\
&=\left(\exp\int_{{\Bbb P}_{n}}\log\lambda_{1}(A)\,d\mu(A),\dots,\exp\int_{{\Bbb P}_{n}}\log\lambda_{n}(A)\,d\mu(A)\right)\\
&=\left(\exp\int_{{\Bbb R}_{+}}\log x\,d(\lambda_1)_{*}\mu(x),\dots,\exp\int_{{\Bbb R}_{+}}\log x\,d(\lambda_n)_{*}\mu(x)\right)\\
&=\left(G(({\lambda_1})_{*}\mu),\dots,G(({\lambda_n})_{*}\mu)\right),
\end{align*}
where in the last equality  the map
 $({\lambda_i})_*:{\mathcal P}^1({\Bbb P}_{n})\to {\mathcal P}^1({\Bbb R}_{+})$ is well-defined by  Example \ref{E:eig}.

Note that for $\mu=(1/m)\sum_{j=1}^m\delta_{A_{j}},$
\begin{eqnarray}\label{EE}
\lambda_*\mu=\frac{1}{m}\sum_{j=1}^{m}\delta_{\lambda(A_{j})} \ \ \ {\mathrm{and}}\ \  \ ({\lambda}_{i})_*\mu
=\frac{1}{m}\sum_{j=1}^{m}\delta_{{\lambda_i}(A_{j})}.
\end{eqnarray}
We have proved the following

\begin{proposition}\label{P:eig}
For $\mu\in \cP^1({\Bbb P}_{n}),$ we have
$$G(\lambda_{*}\mu)=\left(G(({\lambda_1})_{*}\mu),\dots,G(({\lambda_n})_{*}\mu)\right).$$
In particular, for $\mu=(1/m)\sum_{j=1}^m\delta_{A_{j}},$
$$
G(\lambda_{*}\mu)=
G(\lambda(A_{1}),\dots,\lambda(A_{n}))
=\left(\left[\prod_{j=1}^{m}\lambda_{1}(A_{j})\right]^{\frac{1}{m}},\dots,
\left[\prod_{j=1}^{m}\lambda_{n}(A_{j})\right]^{\frac{1}{m}} \right).
$$
\end{proposition}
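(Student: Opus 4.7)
The plan is to reduce everything to the explicit Cartan barycenter formula on the product space $\mathbb{R}_+^n$, which states that $G(\nu)=\exp\int_{\mathbb{R}_+^n}\log x\,d\nu(x)$ for any $\nu\in\cP^1(\mathbb{R}_+^n)$, with $\log$ and the integral understood componentwise. This formula is available to us from the example preceding the statement.

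First, I would apply this formula to $\nu=\lambda_*\mu$, noting that $\lambda_*\mu\in\cP^1(\mathbb{R}_+^n)$ because Example~2.4 guarantees that the push-forward $\lambda_*$ sends $\cP^1(\bP_n)$ into $\cP^1(\mathbb{R}_+^n)$. Then by the change of variables formula for push-forward measures,
\begin{equation*}
G(\lambda_*\mu)=\exp\int_{\mathbb{R}_+^n}\log x\,d(\lambda_*\mu)(x)=\exp\int_{\bP_n}\log\lambda(A)\,d\mu(A),
\end{equation*}
where the integrand $\log\lambda(A)=(\log\lambda_1(A),\dots,\log\lambda_n(A))$ is a vector whose $i$th component is $(\log\lambda_i)(A)$. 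Since integration and the exponential act componentwise, the $i$th coordinate of the right-hand side is $\exp\int_{\bP_n}\log\lambda_i(A)\,d\mu(A)$, and by applying the one-dimensional Cartan barycenter formula in reverse (with another change of variables), this equals $G((\lambda_i)_*\mu)$. Well-definedness of $(\lambda_i)_*\mu\in\cP^1(\mathbb{R}_+)$ is again ensured by Example~2.4. Stringing these identifications together yields the first displayed formula.

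For the special case $\mu=(1/m)\sum_{j=1}^m\delta_{A_j}$, I would use formula \eqref{EE}, which gives $(\lambda_i)_*\mu=(1/m)\sum_{j=1}^m\delta_{\lambda_i(A_j)}$, and then the one-dimensional formula on $\mathbb{R}_+$ evaluates $G((\lambda_i)_*\mu)$ directly as the geometric mean $\bigl[\prod_{j=1}^m\lambda_i(A_j)\bigr]^{1/m}$. Substituting into the first identity gives the explicit product expression.

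Since every step is a routine application of the change of variables formula for push-forwards together with the explicit one-dimensional formula, there is no genuine obstacle; the only point that needs to be tracked is that all the relevant push-forward measures lie in $\cP^1$ of the appropriate space, which has already been secured by the Lipschitz statements in Section~2.
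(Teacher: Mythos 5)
Your proposal is correct and follows essentially the same route as the paper: apply the explicit formula $G(\nu)=\exp\int\log x\,d\nu(x)$ on $\mathbb{R}_+^n$ to $\lambda_*\mu$, change variables to an integral over $\mathbb{P}_n$, split componentwise, change variables back via $(\lambda_i)_*$, and finish with the one-dimensional formula and \eqref{EE} for the discrete case. Both proofs rely on Example \ref{E:eig} to secure that the push-forwards land in the right $\cP^1$ spaces, so there is nothing to add.
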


\section{Log-Majorizations for probability measures}
We have the following diagram involving the eigenvalue map and the Cartan barycenter:
\[ \begin{CD}
{\Bbb P}_{n} @>\lambda>> {\Bbb R}_{+}^n\\
@AG AA     @AAG A\\
{\mathcal P}^{1}({\Bbb P}_{n}) @>{\lambda}_{*}>> {\mathcal P}^{1}({\Bbb R}_{+}^n)
\end{CD}
\]
The diagram does not commute, but finding a relationship between
$\lambda\circ G$ and $G\circ \lambda_*$ seems very interesting. We
establish a log-majorization between them, as well as a similar
log-majorization for the extended symplectic eigenvalues:
\[ \begin{CD}
{\Bbb P}_{2n} @>\d>> {\Bbb R}_{+}^{2n}\\
@AG AA     @AAG A\\
{\mathcal P}^{1}({\Bbb P}_{2n}) @>{\d}_{*}>> {\mathcal P}^{1}({\Bbb R}_{+}^{2n})
\end{CD}
\]

For $0<r<1$ and $\mu\in {\mathcal P}^1({\Bbb P}_{n}),$ let $\mu^r$
denote the push-forward of $\mu$ by the power map $X\mapsto X^{r}$.
Indeed, the power map is a strict contraction for the Riemannian
trace metric (also for the Thompson metric),  as immediately seen
from the log-majorization $\lambda(A^{-r/2}B^rA^{-r/2})
\preclog\lambda^r(A^{-1/2}BA^{-1/2})$, $A,B\in\bP_n$; see
\cite[p.\,229]{Bh2}. Hence the push-forward map $\mu\mapsto\mu^r$ is
a strict contraction from ${\mathcal P}^1({\Bbb P}_{n})$ into
itself.

Let ${\mathcal P}_{0}(\bP_n)$ be the set of all finitely supported
uniform measures on ${\Bbb P}_n$, i.e., measures of the form $\mu =
(1/m) \sum_{j=1}^{m} \delta_{A_{j}}, m\in {\Bbb N},$ where
$\delta_A$ is the point measure of mass $1$ at $A \in {\Bbb P_n}$. We
note that ${\mathcal P}_{0}(\bP_n)$ is dense in the Wasserstein space
${\mathcal P}^1(\bP_n)$ equipped with either $\delta^W$ or $d_{T}^W$.

Let  $\Pro^1({\Bbb S}_{n})$  be the set of probability Borel
measures on the Euclidean space ${\Bbb S}_{n}$ with finite first
moment, i.e., $\int_{{\Bbb S}_{n}}\|X\|_2\,d\mu(X)<\infty$. For each
$\mu\in \Pro^1({\Bbb S}_{n}),$  the identity map on ${\Bbb S}_{n}$
is Bochner $\mu$-integrable and $\mathcal{A}(\mu)=\int_{{\Bbb
S}_{n}}X\, d\mu(X)$ is the arithmetic mean of $\mu.$ Since the
logarithm map $\log: {\Bbb P}_n \to {\Bbb S}_n$ satisfies
$\delta(X,I)=\|\log X\|_{2}$,  the push-forward map $\log_*$ carries
${\mathcal P}^1(\mathbb{P}_n)$ into ${\mathcal P}^1({\mathbb S}_n)$.
In fact,  the EMI property (exponential metric increasing property)
implies that $\log_*:{\mathcal P}^1(\mathbb{P}_n)\to {\mathcal
P}^1({\mathbb S}_n)$ is Lipschitz with Lipschitz constant $1$. This
shows that  the integral $\int_{{\Bbb P}_{n}}\log A\,d\mu(A)\in
{\Bbb S}_{n}$ exists for every $\mu\in {\mathcal P}^1({\Bbb
P}_{n})$. Moreover, similarly to Proposition \ref{P:cont}, the
push-forward $\lambda_*:\Pro^1({\mathbb S}_n)\to\Pro^1(\bR^n)$ of
the eigenvalue map $\lambda:{\mathbb S}_n\to\bR^n$ is Lipschitz with
Lipschitz constant $1$.

\begin{theorem}\label{T:M} We have
\begin{eqnarray}\label{E:int}\lambda(G(\mu))\underset{\log}{\prec}
\lambda^{\frac{1}{r}}(G(\mu^r))\underset{\log}{\prec}
\lambda\left(\exp\int_{{\Bbb P}_{n}}\log
A\,d\mu(A)\right)\underset{\log}{\prec}G(\lambda_{*}\mu)\end{eqnarray}
for every $0<r<1$ and $\mu\in {\mathcal P}^1({\Bbb P}_{n})$.
\end{theorem}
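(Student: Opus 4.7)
\smallskip
\noindent\textbf{Proof plan.}
The strategy is to reduce the three log-majorizations to the corresponding statements on finitely supported uniform measures $\mu=(1/m)\sum_{j=1}^m\delta_{A_j}$ and then extend by density, exploiting the contractivity/Lipschitz machinery already developed. On such $\mu$, by Proposition~\ref{P:eig} and the fact that $G(\mu^r)=G(A_1^r,\dots,A_m^r)$, the chain \eqref{E:int} becomes
\[
\lambda(G(A_1,\dots,A_m))\preclog \lambda(G(A_1^r,\dots,A_m^r))^{1/r}\preclog \lambda\bigl(\exp M\bigr)\preclog \Bigl(\bigl[\textstyle\prod_{j=1}^m\lambda_i(A_j)\bigr]^{1/m}\Bigr)_{i=1}^n,
\]
where $M:=(1/m)\sum_{j=1}^m\log A_j$.

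\smallskip
\noindent The first matrix inequality is the Ando--Hiai log-majorization for the multivariable Karcher mean, i.e., $G(A_1,\dots,A_m)\preclog G(A_1^r,\dots,A_m^r)^{1/r}$ for $0<r<1$. This generalizes the classical two-variable Ando--Hiai log-majorization for the geometric mean and is due to Yamazaki; I would invoke it as a known fact. The second matrix inequality is obtained by combining monotonicity with a Lie--Trotter limit: replacing $A_j$ by $A_j^r$ and applying the first inequality with parameter $s\in(0,1)$ shows that $s\mapsto G(A_1^s,\dots,A_m^s)^{1/s}$ is monotone nonincreasing in $s$ in the log-majorization order, and as $s\to 0^+$ one has $G(A_1^s,\dots,A_m^s)^{1/s}\to\exp M$; since the log-majorization is closed under pointwise limits one obtains the desired bound at parameter $r$. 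The third inequality follows from the integral Lidskii--Wielandt inequality applied to the Hermitian summands $\log A_j$: using $\sum_{i\le k}\lambda_i(\sum_j X_j)\le\sum_j\sum_{i\le k}\lambda_i(X_j)$ with $X_j=(1/m)\log A_j$ and then exponentiating turns the weak majorization of real vectors into a partial-product inequality; equality at $k=n$ is the determinant identity $\det\exp M=\prod_{i,j}\lambda_i(A_j)^{1/m}$.

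\smallskip
\noindent For the density step, given $\mu\in\cP^1(\bP_n)$, choose $\mu_N\in\cP_0(\bP_n)$ with $\delta^W(\mu_N,\mu)\to 0$. The fundamental contraction \eqref{contract} yields $G(\mu_N)\to G(\mu)$; the strict contractivity of $A\mapsto A^r$ (and hence of $\mu\mapsto\mu^r$) yields $\mu_N^r\to\mu^r$ and so $G(\mu_N^r)\to G(\mu^r)$; the $1$-Lipschitz property of $\log_*\colon\cP^1(\bP_n)\to\cP^1(\bS_n)$ together with continuity of the arithmetic barycenter $\mathcal A$ yields $\int\log A\,d\mu_N\to\int\log A\,d\mu$ in $\bS_n$, and continuity of $\exp$ gives convergence of $\exp\int\log A\,d\mu_N$; finally $G(\lambda_*\mu_N)\to G(\lambda_*\mu)$ by Example~\ref{E:eig} and \eqref{contract}. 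Lipschitz continuity of $\lambda$ (Proposition~\ref{P:cont}) transfers all convergences to the eigenvalue vectors, and log-majorization survives pointwise limits because each partial product is continuous in the vector.

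\smallskip
\noindent The main obstacle is the first link in the chain: this is exactly the multivariable Ando--Hiai log-majorization for the Karcher mean, which is the one genuinely nontrivial matrix inequality required. The second link then comes almost for free from the first via the monotonicity-in-$r$ argument, provided one verifies the Lie--Trotter-type limit $G(A_1^r,\dots,A_m^r)^{1/r}\to\exp(\tfrac{1}{m}\sum\log A_j)$ as $r\to 0^+$; the only subtlety is justifying this limit in the norm topology, for which the Karcher equation \eqref{Karcher} can be exploited to show that a suitable subsequential limit satisfies the log-Euclidean equation $M=\tfrac{1}{m}\sum\log A_j$.
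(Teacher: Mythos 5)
Your proposal is correct and uses essentially the same ingredients as the paper's proof: the multivariable Ando--Hiai log-majorization for the Karcher/Cartan mean (the paper cites the measure-level version from \cite{HL}), the Lie--Trotter limit $G(\mu^s)^{1/s}\to\exp\int\log A\,d\mu$ from \cite{HL1}, the Ky Fan majorization $\lambda(\sum_jX_j)\prec\sum_j\lambda(X_j)$ for the Hermitian matrices $\log A_j$, and a density argument via the Lipschitz/contraction machinery. The only organizational difference is that you reduce all three links to finitely supported measures before invoking density, whereas the paper applies the first two links directly at the measure level (those references already cover general $\mu\in\cP^1(\bP_n)$) and reserves the density argument for the Ky Fan step alone, working with the linear weak majorization $\lambda(\int\log A\,d\mu)\prec\int\log\lambda(A)\,d\mu$ before exponentiating.
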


\begin{proof} Let $\mu\in {\mathcal P}^1({\Bbb P}_{n})$. The first log-majorization follows from the  log-majorization of the Cartan barycenter
 appearing in \cite{HL}
$$G(\mu)\underset{\log}{\prec}G(\mu^r)^{\frac{1}{r}}\underset{\log}{\prec}G(\mu^s)^{\frac{1}{s}}, \ \ \ 0<s\leq r<1.$$
As $s\searrow0$  the Lie-Trotter formula \cite{HL1}
\begin{equation*}\label{Lie-Trotter2}
\lim_{s\to0}G(\mu^s)^{\frac{1}{s}}=\exp\int_{\bP_m}\log A\,d\mu(A)
\end{equation*} gives
$$G(\mu)\underset{\log}{\prec}\exp\int_{\bP_m}\log A\,d\mu(A)$$
so that
$$\log \lambda(G(\mu))\underset{}{\prec}\lambda\left(\int_{{\Bbb P}_{m}}\log A\,d\mu(A)\right).$$
For any $\mu\in {\mathcal P}_{0}({\Bbb P}_{n})$, the Ky Fan
majorization (see, e.g., \cite{Bh1,Hi}) yields
$$\lambda\left(\int_{{\Bbb P}_{n}}\log A\,d\mu(A)\right)\underset{}{\prec}\int_{{\Bbb P}_{m}}\lambda(\log A)\,d\mu(A)=
\int_{{\Bbb P}_{n}}\log \lambda(A)\,d\mu(A).$$ As mentioned above
the theorem, note that $\log_*:{\mathcal P}^1(\mathbb{P}_n)\to
{\mathcal P}^1({\mathbb S}_n)$ and $\lambda_*:\Pro^1({\mathbb
S}_n)\to\Pro^1(\bR^n)$ are Lipschitz. Hence, by density of
${\mathcal P}_{0}({\Bbb P}_{n})$ in the Wasserstein space ${\mathcal
P}^1({\Bbb P}_{n})$, the preceding majorization holds for any
$\mu\in {\mathcal P}^1({\Bbb P}_{n})$. Therefore,
$$\lambda\left(\exp\int_{{\Bbb P}_{n}}\log A\,d\mu(A)\right)\underset{\log}{\prec}\exp
\int_{{\Bbb P}_{n}}\log \lambda(A)\,d\mu(A)=G(\lambda_*\mu).$$
\end{proof}

Applying a measure $\mu=(1/m)\sum_{j=1}^{m}\delta_{A_{j}}\in
{\mathcal P}_{0}({\Bbb P}_{n})$ to \eqref{E:int} yields
\begin{align*}
\lambda(G(A_{1},\dots,A_{m}))
&\underset{\log}{\prec}\lambda^{\frac{1}{r}}(G(A_{1}^r,\dots,A_{n}^r))
\underset{\log}{\prec}\lambda\left(\exp\left(\frac{1}{m}\sum_{j=1}^{m}\log A_{j}\right)\right)\\
&\underset{\log}{\prec}G(\lambda(A_{1}),\dots,\lambda(A_{n}))\\
&=\left(\left[\prod_{j=1}^{m}\lambda_{1}(A_{j})\right]^{\frac{1}{m}},
\dots,\left[\prod_{j=1}^{m}\lambda_{n}(A_{j})\right]^{\frac{1}{m}}\right)
\end{align*}
thanks to Proposition \ref{P:eig}.

\begin{remark}
Although we confine ourselves in this paper to the real positive
definite matrices, the results for the eigenvalue map  hold true
when $\bP_n$ is the $n\times n$ complex positive definite matrices.
\end{remark}

Finally we consider the extended symplectic eigenvalue map $\d$.

\begin{theorem}\label{T-4.3}
For every $\mu\in {\mathcal P}^1({\Bbb P}_{2n})$,
\begin{align}\label{F-4.2}
\d^{1\over r}(G(\mu^r))\underset{\log}{\prec}G(\d_{*}\mu),\qquad 0<r\le1.
\end{align}
\end{theorem}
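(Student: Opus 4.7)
My plan mirrors the proof of Theorem \ref{T:M}, with the Ky Fan / Araki chain replaced by two applications of the Bhatia--Jain log-majorization \eqref{eigen-log-3}. Both sides of \eqref{F-4.2} are $\delta^W$-continuous in $\mu$: the left side factors as $\mu\mapsto\mu^r\mapsto G(\mu^r)\mapsto\d(G(\mu^r))^{1/r}$, which is Lipschitz by $\delta$-contractivity of the power map, the fundamental contraction \eqref{contract}, Theorem \ref{T:LL}, and the local Lipschitz property of the coordinatewise $1/r$-th power; the right side $\mu\mapsto\d_{*}\mu\mapsto G(\d_{*}\mu)$ is Lipschitz by Theorem \ref{T:LL}, Lemma \ref{L:Lip}, and the Lipschitz property of the Cartan barycenter on $\cP^1(\R_+^{2n})$. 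Since $\underset{\log}{\prec}$ is closed under coordinatewise limits and $\mathcal P_0(\bP_{2n})$ is $\delta^W$-dense in $\mathcal P^1(\bP_{2n})$, I may restrict to $\mu=(1/m)\sum_{j=1}^m\delta_{A_j}$.

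\textbf{Step 2 (symplectic subhomogeneity of the power map).} The key lemma is
$$\d(A^r)\underset{\log}{\prec}\d(A)^r,\qquad A\in\bP_{2n},\ \ 0<r\le1.$$
For rational $r=k/(k+l)$ with $k,l\in\N$, the Karcher equation \eqref{Karcher} (checked directly in the commutative case) identifies $A^{k/(k+l)}$ with the Cartan mean of $k$ copies of $A$ and $l$ copies of $I$. Applying \eqref{eigen-log-3} and using $\d(I)=(1,\dots,1)$ yields $\d(A^{k/(k+l)})\underset{\log}{\prec}\d(A)^{k/(k+l)}$. Density of such rationals in $(0,1]$ together with continuity of both sides in $r$ then extends the inequality to every $r\in(0,1]$.

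\textbf{Step 3 (conclusion).} For $\mu=(1/m)\sum_{j=1}^m\delta_{A_j}$, apply \eqref{eigen-log-3} to $A_1^r,\dots,A_m^r$ to obtain $\d(G(\mu^r))\underset{\log}{\prec}G(\d(A_1^r),\dots,\d(A_m^r))$. Raising coordinatewise to the $1/r$-th power preserves $\underset{\log}{\prec}$; by Step 2, for each $j$ and each $1\le k\le 2n$, $\prod_{i=1}^k\d_i(A_j^r)\le\bigl(\prod_{i=1}^k\d_i(A_j)\bigr)^r$ with equality at $k=2n$. Multiplying over $j$ and extracting the $1/(mr)$-power yields
$$G(\d(A_1^r),\dots,\d(A_m^r))^{1/r}\underset{\log}{\prec}\Bigl(\prod_{j=1}^m\d_i(A_j)^{1/m}\Bigr)_{i=1}^{2n}=G(\d_{*}\mu),$$
which combined with the first inequality gives \eqref{F-4.2} on $\mathcal P_0(\bP_{2n})$, hence on all of $\cP^1(\bP_{2n})$ by Step 1. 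The main difficulty is Step 2: the identification of the weighted power $A^{k/(k+l)}$ as an equal-weight Cartan mean of $A$'s and $I$'s, followed by the density/continuity extension. An equivalent and cleaner route would be the two-variable weighted Bhatia--Jain inequality $\d(A\#_sB)\underset{\log}{\prec}\d(A)\#_s\d(B)$ applied with $B=I$, which would directly yield $\d(A^r)\underset{\log}{\prec}\d(A)^r$ for all $r\in(0,1]$.
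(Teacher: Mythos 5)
Your proof is correct, and it takes a genuinely different route to the key finite-support lemma than the paper does. The paper proves Lemma~\ref{L-4.4} (the weighted finite case) from scratch: it uses the characterization $\d_1(A)\le\alpha\iff J^TAJ\le\alpha^2A^{-1}$, operator monotonicity of $t\mapsto t^r$ to pass from $J^TA_jJ\le\alpha_j^2A_j^{-1}$ to $J^TA_j^rJ\le\alpha_j^{2r}A_j^{-r}$, the congruence invariance / monotonicity / joint homogeneity / self-duality of $G_w$ to bound $\d_1(G_w(A_1^r,\dots,A_m^r))$, and finally the antisymmetric tensor power trick (Remark~\ref{R-4.5}) to upgrade the top-eigenvalue bound to a full log-majorization. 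In particular the paper's argument is independent of \cite{BJ} and reproves \eqref{eigen-log-3}. You instead take \eqref{eigen-log-3} as a black box and apply it twice: once, with $k$ copies of $A$ and $l$ copies of $I$ and $\d(I)=(1,\dots,1)$, to obtain the power subhomogeneity $\d(A^r)\preclog\d(A)^r$ (which is exactly the $m=1$ special case of Lemma~\ref{L-4.4}), and once to the matrices $A_1^r,\dots,A_m^r$; the two are then combined by the elementary product manipulation in your Step~3, and the density step is the same as the paper's. Your Karcher-equation identification of $A^{k/(k+l)}$ as an unweighted Cartan mean is correct (and trivial here since $A$ and $I$ commute), and the continuity/density extension to irrational $r$ and to general $\mu$ is sound. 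The trade-off: the paper's approach is self-contained, yields the weighted version for free, and gives a new proof of Bhatia--Jain; your approach is shorter but leans on \cite{BJ}. One small caveat on your closing remark: the two-variable \emph{weighted} inequality $\d(A\#_sB)\preclog\d(A)\#_s\d(B)$ is not stated in \cite{BJ} for general $s$ (it follows from the paper's Lemma~\ref{L-4.4}, or from your rational-weight argument plus continuity), so your first route is the more defensible one if \cite{BJ} is the only external input.
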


To prove the theorem, we first settle the case where $\mu\in\Pro_0(\bP_{2n})$. For this
we consider slightly more generally the Cartan mean (or the Karcher mean)
$G_w(A_1,\dots,A_m)$ of $A_1,\dots,A_m\in\bP_{2n}$ with a weight $w=(w_1,\dots,w_m)$,
$w_j\ge0$ and $\sum_{j=1}^mw_j=1$.

\begin{lemma}\label{L-4.4}
For every $A_,\dots,A_m\in\bP_{2n}$,
$$
\d^{1\over r}(G_w(A_1^r,\dots,A_m^r))\preclog G_w(\d(A_1),\dots,\d(A_m)),\qquad0<r\le1.
$$
\end{lemma}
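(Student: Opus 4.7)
The plan is to combine three ingredients: the weighted version of the Bhatia--Jain log-majorization \eqref{eigen-log-3}, the Araki--Lieb--Thirring (ALT) log-majorization, and a routine monotonicity of the componentwise weighted geometric mean under $\preclog$.

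First, I apply the weighted Bhatia--Jain inequality $\d(G_w(B_1,\dots,B_m)) \preclog G_w(\d(B_1),\dots,\d(B_m))$ with $B_j := A_j^r$, then raise entrywise to the $1/r$-th power (which preserves $\preclog$ since $x\mapsto x^{1/r}$ is strictly increasing on $\bR_+$, and satisfies $G_w(\cdot)^{1/r}=G_w((\cdot)^{1/r})$ componentwise on positive scalars):
$$\d^{1/r}(G_w(A_1^r,\dots,A_m^r)) \preclog G_w(\d^{1/r}(A_1^r),\dots,\d^{1/r}(A_m^r)).$$

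The crucial sublemma I then establish is $\d^{1/r}(A^r) \preclog \d(A)$ for every $A\in\bP_{2n}$ and $0<r\le 1$. Setting $B:=J^TAJ\in\bP_{2n}$, so that $J^TA^rJ=B^r$ (since $J$ is orthogonal), formula \eqref{E:seigen} yields $\d^2(A) = \lambda(A^{1/2}BA^{1/2})$ and $\d^2(A^r) = \lambda(A^{r/2}B^rA^{r/2})$. The ALT log-majorization
$$\lambda(A^{r/2} B^r A^{r/2}) \preclog \lambda^r(A^{1/2} B A^{1/2}), \qquad 0<r\le 1,$$
then reads $\d^2(A^r)\preclog\d^{2r}(A)$, which by entrywise $(2r)^{-1}$-power is equivalent to $\d^{1/r}(A^r) \preclog \d(A)$ (the equality of products at $k=2n$ is automatic since $\prod_i\d_i(A)=\det A$).

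Finally, I use the elementary observation that if $x_j \preclog y_j$ in $\bR_+^{2n}$, with each $x_j,y_j$ sorted decreasingly, for every $j$, then the componentwise weighted geometric means $u_i := \prod_j x_{j,i}^{w_j}$ and $v_i := \prod_j y_{j,i}^{w_j}$ are also sorted decreasingly and satisfy $u\preclog v$; indeed $\sum_{i=1}^k \log u_i = \sum_j w_j\sum_{i=1}^k\log x_{j,i} \le \sum_j w_j\sum_{i=1}^k\log y_{j,i} = \sum_{i=1}^k\log v_i$, with equality at $k=2n$. Applying this with $x_j=\d^{1/r}(A_j^r)$ and $y_j=\d(A_j)$, and chaining with the first display, gives the lemma. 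The main obstacle I anticipate is that \eqref{eigen-log-3} is stated in \cite{BJ} only for equal weights; I would handle rational weights by replicating the $A_j$'s in the equal-weight case, then pass to arbitrary $w$ by continuity of $\d$ (Theorem \ref{T:LL}) and of the weighted Cartan mean in the weight vector.
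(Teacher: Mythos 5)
Your proof is correct, but it takes a genuinely different route from the paper. The paper's argument is self-contained: it establishes the scalar characterization $\d_1(A)\le\alpha\iff J^TAJ\le\alpha^2A^{-1}$ via \eqref{E:seigen}, then uses operator monotonicity of the power map for $0<r\le1$ together with congruence invariance, monotonicity, joint homogeneity, and self-duality of $G_w$ to get the $k=1$ log-majorization inequality directly for all $r$ and all weights, and finally extends to all $k$ by the antisymmetric tensor power technique (with a small remark justifying this even though $\wedge^kJ$ is no longer a standard $J$-matrix). Your proof instead factors the result as: (i) the $r=1$ weighted case, deduced from the unweighted Bhatia--Jain inequality by replication plus continuity in the weight; (ii) the sublemma $\d^{1/r}(A^r)\preclog\d(A)$, deduced from \eqref{E:seigen} and the Araki--Lieb--Thirring log-majorization; (iii) an elementary monotonicity of componentwise weighted geometric means under $\preclog$. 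All three ingredients check out: the ALT application is valid since $J^TA^rJ=(J^TAJ)^r$, and the determinant normalizations match because $\prod_i\d_i(A)=\det A$. What the paper's route buys is independence from \cite{BJ} (indeed it yields a new proof of \eqref{eigen-log-3} as a byproduct) and a single unified argument for all $r$; what your route buys is modularity and the isolation of the clean sublemma $\d(A^r)\preclog\d^r(A)$, which is an interesting standalone fact and is exactly the $m=1$ case of the lemma. The only loose end in your write-up is the replication-and-continuity step for general weights, which is routine but should be spelled out; the paper sidesteps this by working with general $w$ from the start.
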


\begin{proof}
When $r=1$ this was shown in \cite{BJ}, but the proof below is rather different from that
in \cite{BJ}. First, note that for every $A\in\bP_{2n}$ and $\alpha>0$,
\begin{align}\label{F-4.3}
\d_1(A)\le\alpha\ \iff\ J^TAJ\le\alpha^2 A^{-1}.
\end{align}
Indeed, this is immediately seen from \eqref{E:seigen} since
$$
\lambda^{1/2}(A^{1/2}J^TAJA^{1/2})\le\alpha\ \iff\ J^TAJ\le\alpha^2 A^{-1}.
$$
Now for $j=1,\dots,m$ let $\alpha_j:=\d_1(A_j)$; then
$J^TA_jJ\le\alpha_j^2A_j^{-1}$ by \eqref{F-4.3}. Since $0<r\le1$,
$J^TA_j^rJ\le\alpha_j^{2r}A_j^{-r}$ for $j=1,\dots,m$.
By congruence invariance, monotonicity, joint homogeneity and
self-duality of $G_w$ (see \cite{LL1}) we have
\begin{align*}
J^TG_w(A_1^r,\dots,A_m^r)J&=G_\omega(J^TA_1^rJ,\dots,J^TA_m^rJ)\\
&\le G_w(\alpha_1^{2r}A_1^{-r},\dots,\alpha_m^{2r}A_m^{-r}) \\
&=(\alpha_1^{w_1}\cdots\alpha_m^{w_m})^{2r}G_w(A_1^r,\dots,A_m^r)^{-1},
\end{align*}
which implies by \eqref{F-4.3} again that
$$
\d_1(G_w(A_1^r,\dots,A_m^r))\le(\alpha_1^{w_1}\cdots\alpha_m^{w_m})^r.
$$
Therefore,
$$
\d_1^{1\over r}(G_w(A_1^r,\dots,A_m^r))\le G_w(\d_1(A_1),\dots\d_1(A_m)).
$$
The remaining proof is an application of the standard antisymmetric tensor power
technique (for this see Remark \ref{R-4.5} below), as in the proof of \cite[Theorem 3]{BJ}
with use of \cite[Theorem 4.3]{BK}.
\end{proof}

\begin{remark}\label{R-4.5}
For $k=1,\dots,2n$ let $J^{(k)}:=\wedge^kJ$, the $k$-fold antisymmetric tensor power of $J$.
For any $A\in\bP_{2n}$, since \eqref{E:seigen} implies that
$$
\prod_{i=1}^k\d_i(A)
=\lambda_i^{1/2}\left((\wedge^kA)^{1/2}J^{(k)T}(\wedge^kA)J^{(k)}(\wedge^kA)^{1/2}\right),
$$
the last part of the above proof can be carried out, although $J^{(k)}$ is not a $J$-matrix
of size ${2n\choose k}$ in the definition of the symplectic Lie group
$\Sp({2n\choose k},\bR)$ (see Section 2).
\end{remark}

\noindent
\emph{Proof of Theorem \ref{T-4.3}}.\enspace
Let $0<r\le1$. Lemma \ref{L-4.4} says in particular that \eqref{F-4.2} holds when
$\mu\in\Pro_0(\bP_{2n})$. Now let $\mu\in {\mathcal P}^1({\Bbb P}_{2n})$. By density,
we can find a sequence $\mu_{k}\in {\mathcal P}_{0}({\Bbb P}_{2n})$ converging to
$\mu$ for the Wasserstein metric $\delta^W$. By Theorem \ref{T:LL},
$\delta^W(\d_*\mu_{k},\d_*\mu)\to 0$ as $k\to\infty$. Since $\mu\to\mu^r$ is a contraction
from $\Pro^1(\bP_{2n})$ into itself, $\delta^W(\mu_k^r,\mu^r)\le\delta^W(\mu_k,\mu)\to0$.
By the fundamental contraction property,
$$\delta(G(\mu_{k}^r),G(\mu^r))\leq \delta^W(\mu_{k}^r,\mu^r)\to 0$$
and also
$$\delta(G(\d_*\mu_{k}),G(\d_*\mu))\leq \delta^W(\d_*\mu_{k},\d_*\mu)\to 0.$$
Since $\d$ and $\d_*$ are  continuous, we have $\d(G(\mu_k^r))\to
\d(G(\mu^r))$ as well as $G(\d_*\mu_{k})\to G(\d_*\mu)$ in ${\Bbb
R}_{+}^{2n}$. By Lemma \ref{L-4.4} we have $\d^{1\over
r}(G(\mu_k^r))\underset{\log}{\prec}G(\d_*\mu_k)$. Hence letting
$k\to \infty$ gives $\d^{1\over
r}(G(\mu^r))\underset{\log}{\prec}G(\d_{*}\mu)$.\qed

\begin{remark}
Let $0<r<1$. Compared with the log-majorizations in \eqref{T:M} one may think of the
following, where $\mu\in\Pro^1(\bP_{2n})$, $A,B\in\bP_{2n}$ and $0<t<1$:
\begin{itemize}
\item[(a)] $\d(G(\mu^r)^{1\over r})\preclog G(\d_*\mu)$? In particular,
$\d((A^r\#_t B^r)^{1\over r})\preclog\d^{1-t}(A)\d^t(B)$?
\item[(b)] $\d(G(\mu)^r)\preclog\d(G(\mu^r))$? In particular,
$\d((A\#_t B)^r)\preclog\d(A^r\#_t B^r)$?
\item[(c)] $\d\left(\exp\int_{\bP_{2n}}\log X\,d\mu(X)\right)\preclog G(\d_*\mu)$?
\end{itemize}
When $n=1$, since $\d(X)=(\det^{1\over2}(X),\det^{1\over2}(X))$ for any $X\in\bP_2$, the
above are all trivial as both sides of each of (a)--(c) are equal. However, when $n\ge2$,
it is rather difficult for us to expect that the log-majorizations in (a)--(c) hold true,
while we have no explicit counterexamples.
\end{remark}

We have directly the following general version, which provides the
proof of the main result (Theorem \ref{T:main}). Indeed,
$\varphi_*{\bf P}\in {\mathcal P}^1({\Bbb P}_{n})$  for every
$\varphi\in L^1(\Omega;{\Bbb P}_{n}),$ where $(\Omega, {\bf P})$ is
a probability space, and then by Theorem \ref{T:M},
$$\lambda(G(\varphi_*{\bf P}))\underset{\log}{\prec}G(\lambda_*(\varphi_*{\bf
P}))=G((\lambda\circ \varphi)_*{\bf P}),$$ and similarly for the
case of the symplectic eigenvalues when $\varphi\in L^1(\Omega;{\Bbb
P}_{2n})$.

\begin{theorem} Let $(\Omega, {\bf P})$ be a probability space. Then for every $\varphi\in L^1(\Omega;{\Bbb P}_{n}),$ that is,
 $\varphi:\Omega\to {\Bbb P}_{n}$ satisfying $\int_{\Omega}\delta(\varphi(\omega),X)\,d{\bf P}(\omega)<\infty$ for some $X\in {\Bbb P}_{n},$
\begin{eqnarray}\label{integ}\lambda(G(\varphi_*{\bf P}))\underset{\log}{\prec}G((\lambda\circ \varphi)_*{\bf P}).
\end{eqnarray} Moreover, for every $\varphi\in L^1(\Omega;{\Bbb P}_{2n}),$
\begin{eqnarray}\label{integ}\d(G(\varphi_*{\bf P}))\underset{\log}{\prec}G((\d\circ \varphi)_*{\bf P}).
\end{eqnarray}
\end{theorem}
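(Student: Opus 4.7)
The proof is essentially a direct transfer of the results already established for measures on $\bP_n$ (and $\bP_{2n}$) to the random-variable formulation on a probability space, using the functorial behavior of push-forwards. The plan is to reduce both assertions to Theorem \ref{T:M} and Theorem \ref{T-4.3} (with $r=1$) by showing that $\varphi_*\mathbf{P}$ lies in the appropriate integrable Wasserstein space and then using the identity $\lambda_*(\varphi_*\mathbf{P}) = (\lambda\circ\varphi)_*\mathbf{P}$.

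First I would verify the measurability/integrability requirement: if $\varphi\in L^1(\Omega;\bP_n)$, then there is some $X\in\bP_n$ with $\int_\Omega \delta(\varphi(\omega),X)\,d\mathbf{P}(\omega)<\infty$. By the change of variables formula for push-forwards,
$$
\int_{\bP_n}\delta(A,X)\,d(\varphi_*\mathbf{P})(A)=\int_\Omega\delta(\varphi(\omega),X)\,d\mathbf{P}(\omega)<\infty,
$$
so $\varphi_*\mathbf{P}\in\cP^1(\bP_n)$. Next, by the general functoriality of push-forwards under composition, for any Borel map $f$ defined on $\bP_n$ one has $f_*(\varphi_*\mathbf{P})=(f\circ\varphi)_*\mathbf{P}$; applied to $f=\lambda$ (which is continuous, hence Borel, and even Lipschitz by Example \ref{E:eig}), this gives $\lambda_*(\varphi_*\mathbf{P})=(\lambda\circ\varphi)_*\mathbf{P}$.

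With these two observations in place, I simply invoke Theorem \ref{T:M} with the measure $\mu=\varphi_*\mathbf{P}\in\cP^1(\bP_n)$: the rightmost log-majorization in \eqref{E:int} yields
$$
\lambda(G(\varphi_*\mathbf{P}))\preclog G(\lambda_*(\varphi_*\mathbf{P}))=G((\lambda\circ\varphi)_*\mathbf{P}),
$$
which is the first claimed inequality. For the symplectic case, since $\d:\bP_{2n}\to\bR_+^{2n}$ is continuous by Theorem \ref{T:LL} and sends $\cP^1(\bP_{2n})$ into $\cP^1(\bR_+^{2n})$, the same push-forward identity gives $\d_*(\varphi_*\mathbf{P})=(\d\circ\varphi)_*\mathbf{P}$; applying Theorem \ref{T-4.3} with $r=1$ to $\mu=\varphi_*\mathbf{P}$ yields
$$
\d(G(\varphi_*\mathbf{P}))\preclog G(\d_*(\varphi_*\mathbf{P}))=G((\d\circ\varphi)_*\mathbf{P}).
$$

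There is no real obstacle here beyond the bookkeeping of push-forwards; the heavy lifting has already been done in Theorems \ref{T:M} and \ref{T-4.3}. The only point worth stating with some care is the verification that $\varphi_*\mathbf{P}$ belongs to $\cP^1(\bP_n)$ (resp.\ $\cP^1(\bP_{2n})$), since this is precisely the hypothesis that allows the earlier theorems to apply. Everything else is a one-line identification via the change-of-variables formula.
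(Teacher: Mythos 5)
Your proof is correct and follows essentially the same approach as the paper: verify $\varphi_*\mathbf{P}\in\cP^1(\bP_n)$ (resp.\ $\cP^1(\bP_{2n})$), identify $\lambda_*(\varphi_*\mathbf{P})=(\lambda\circ\varphi)_*\mathbf{P}$ and $\d_*(\varphi_*\mathbf{P})=(\d\circ\varphi)_*\mathbf{P}$ by functoriality of push-forwards, and then apply the rightmost log-majorization of Theorem \ref{T:M} and Theorem \ref{T-4.3} with $r=1$. You merely spell out the change-of-variables bookkeeping that the paper leaves implicit.
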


More precisely we have from  (\ref{E:int}),

\begin{corollary} For every $\varphi\in L^1(\Omega; {\Bbb P}_{n}),$
\begin{align*}
\lambda\left(\int_{\Omega}^{(G)}\varphi(\omega)\,d{\bf
P}(\omega)\right)&\underset{\log}{\prec}
\lambda^{\frac{1}{r}}\left(\int_{\Omega}^{(G)}\varphi(\omega)^r\,d{\bf
P}(\omega)\right)\\
&\underset{\log}{\prec}
\lambda\left(\exp\int_{\Omega}\log
\varphi(\omega)\,d{\bf P}(\omega)\right)\\
&\underset{\log}{\prec}
\int_{\Omega}^{(G)}\lambda(\varphi(\omega))\,d{\bf P}(\omega).
\end{align*}
\end{corollary}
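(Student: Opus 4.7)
The plan is to derive the corollary as a direct reformulation of Theorem \ref{T:M}, by pushing the measure $\mu=\varphi_*{\bf P}\in\cP^1(\bP_n)$ through the various maps that appear in each term. Setting $\mu:=\varphi_*{\bf P}$, the very definition of the geometric integral gives
$$
\int_\Omega^{(G)}\varphi(\omega)\,d{\bf P}(\omega)=G(\varphi_*{\bf P})=G(\mu),
$$
and similarly, since the power map $X\mapsto X^r$ composes with $\varphi$ to give $\varphi^r$, we get $(\varphi^r)_*{\bf P}=(\varphi_*{\bf P})^r=\mu^r$, so
$$
\int_\Omega^{(G)}\varphi(\omega)^r\,d{\bf P}(\omega)=G(\mu^r).
$$

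Next I would handle the third and fourth terms via the usual change-of-variables formula for push-forward measures. For the fourth term, since $(\lambda\circ\varphi)_*{\bf P}=\lambda_*(\varphi_*{\bf P})=\lambda_*\mu$,
$$
\int_\Omega^{(G)}\lambda(\varphi(\omega))\,d{\bf P}(\omega)=G((\lambda\circ\varphi)_*{\bf P})=G(\lambda_*\mu).
$$
For the third term I would invoke the Bochner integrability of $\log\circ\varphi$ (which follows from the EMI-based Lipschitz property of $\log_*:\cP^1(\bP_n)\to\cP^1({\Bbb S}_n)$ noted before Theorem \ref{T:M}) to conclude
$$
\int_\Omega\log\varphi(\omega)\,d{\bf P}(\omega)=\int_{\bP_n}\log A\,d\mu(A)\in{\Bbb S}_n,
$$
and hence $\exp\int_\Omega\log\varphi(\omega)\,d{\bf P}(\omega)=\exp\int_{\bP_n}\log A\,d\mu(A)$.

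With these four identifications in hand, the chain of log-majorizations to be proved is term-by-term identical to the chain
$$
\lambda(G(\mu))\preclog\lambda^{1/r}(G(\mu^r))\preclog\lambda\Bigl(\exp\!\int_{\bP_n}\!\log A\,d\mu(A)\Bigr)\preclog G(\lambda_*\mu)
$$
established in Theorem \ref{T:M}. The result then follows immediately.

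I do not expect any genuine obstacle: the only point that needs a moment of care is verifying that all three pushforward/change-of-variables manipulations are legal for a general $\varphi\in L^1(\Omega;\bP_n)$ (rather than, say, just for bounded or simple $\varphi$). This is handled by the integrability of $\varphi_*{\bf P}$ under $\delta$ and the Lipschitz property of $\log_*$, both of which are already in place in the paper, so the corollary is purely a matter of transcription from the measure-theoretic formulation of Theorem \ref{T:M} to the probabilistic one.
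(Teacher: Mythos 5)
Your proof is correct and follows exactly the paper's approach: the corollary is obtained by setting $\mu=\varphi_*\mathbf{P}$, noting that $(\varphi^r)_*\mathbf{P}=\mu^r$, $(\lambda\circ\varphi)_*\mathbf{P}=\lambda_*\mu$, and $\int_\Omega\log\varphi\,d\mathbf{P}=\int_{\bP_n}\log A\,d\mu(A)$ by change of variables, and then reading off the chain \eqref{E:int} from Theorem \ref{T:M}. The paper treats this as an immediate transcription (see the remark preceding the corollary), which is precisely what you have written out.
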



\section{Log-majorizations for multiple probability measures}
There is a natural notion of multivariate ``geometric" mean of
integrable probability Borel measures \cite{HLL}. The Cartan mean of
$m$ positive definite matrices
 $G:{\Bbb P}_{n}^m\to {\Bbb P}_{n}$  is Lipschitz from the fundamental contraction property and hence induces   a Lipschitz map
 $\Lambda:({\mathcal P}^1({\Bbb P}_{n}))^m\to {\mathcal P}^1({\Bbb P}_{n})$ defined by
 $$\Lambda(\mu_{1},\dots,\mu_{m}):=G_{*}(\mu_{1}\times\cdots\times \mu_{m})\in {\mathcal P}^1({\Bbb P}_{n}).$$
Note that  $\Lambda(\mu)=\mu$ for $m=1$.
 By our log-majorization
in Theorem \ref{T:M},
\begin{align}\label{F-5.1}
\lambda (G(\Lambda(\mu_{1},\dots,\mu_{m})))\underset{\log}{\prec}G(\lambda_{*}\Lambda(\mu_{1},\dots,\mu_{m}))=
G((\lambda\circ G)_{*}(\mu_{1}\times\cdots\times\mu_{m})).
\end{align}
However, from  $\lambda_*\mu_{j}\in {\mathcal P}^1({\Bbb R}_{+}^n),$
$$\Lambda(\lambda_*\mu_{1},\dots,\lambda_*\mu_{m}):=G_*(\lambda_*\mu_{1}\times \cdots\times
\lambda_*\mu_{n})\in {\mathcal P}^1({\Bbb R}_{+}^n)$$ and
$G(\Lambda(\lambda_*\mu_{1},\dots,\lambda_*\mu_{m}))\in {\Bbb
R}_{+}^n$. Between this and both sides of \eqref{F-5.1} we have the
following log-majorizations.

\begin{theorem}\label{P-5.1}
For every $\mu_1,\dots,\mu_m\in\Pro^1(\bP_n)$,
\begin{align}\label{F-5.2}
\lambda(G(\Lambda(\mu_1,\dots,\mu_m)))\preclog G(\lambda_*\Lambda(\mu_1,\dots,\mu_m))
\preclog G(\Lambda(\lambda_*\mu_1,\dots,\lambda_*\mu_m)).
\end{align}
\end{theorem}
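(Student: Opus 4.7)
The first log-majorization in \eqref{F-5.2} is immediate: it is the rightmost log-majorization of Theorem \ref{T:M} applied to the single measure $\Lambda(\mu_1,\dots,\mu_m)\in\Pro^1(\bP_n)$.

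For the second, my plan is to exploit the explicit form $G(\nu)=\exp\int_{\bR_+^n}\log y\,d\nu(y)$ of the Cartan barycenter on $\bR_+^n$, applied componentwise, to rewrite both sides as exponentials of integrals over $\bP_n^m$ against the product measure $\mu^\times:=\mu_1\times\cdots\times\mu_m$. Since $G$ on $\bR_+^n$ is the componentwise geometric mean, so that $\log G(\lambda(A_1),\dots,\lambda(A_m))=(1/m)\sum_{j=1}^m\log\lambda(A_j)$, standard pushforward identities give
\begin{align*}
\log G(\lambda_*\Lambda(\mu_1,\dots,\mu_m))&=\int_{\bP_n^m}\log\lambda(G(A_1,\dots,A_m))\,d\mu^\times,\\
\log G(\Lambda(\lambda_*\mu_1,\dots,\lambda_*\mu_m))&=\int_{\bP_n^m}\log G(\lambda(A_1),\dots,\lambda(A_m))\,d\mu^\times.
\end{align*}

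Next I would invoke the Bhatia--Karandikar pointwise log-majorization \eqref{eigen-log-2}; taking logarithms it becomes
$$\log\lambda(G(A_1,\dots,A_m))\prec\log G(\lambda(A_1),\dots,\lambda(A_m))\quad\text{in }\bR^n,$$
for every $(A_1,\dots,A_m)\in\bP_n^m$. The crucial structural observation is that both sides are already in \emph{decreasing order}---the left-hand side because $\lambda$ is ordered decreasingly, the right-hand side because it is an arithmetic average of decreasing vectors. Consequently the pointwise majorization reduces to the scalar inequalities $\sum_{i=1}^k[\log\lambda(G(A))]_i\le\sum_{i=1}^k[\log G(\lambda(A))]_i$ for $k=1,\dots,n$, with equality at $k=n$, involving no rearrangement. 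Integrating termwise against $\mu^\times$ preserves each inequality, and the integrated vectors remain in decreasing order componentwise (integrals of pointwise decreasing sequences are decreasing). Hence the two integrals above are majorization-ordered in $\bR^n$, and exponentiating yields the desired log-majorization.

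The subtlety I would be most careful about is that pointwise majorization of vector-valued functions is \emph{not} in general preserved under integration---a simple $n=2$ counterexample is $u(\omega_1)=u(\omega_2)=(1,0)$ versus $v(\omega_1)=(1,0)$, $v(\omega_2)=(0,1)$, which satisfies $u(\omega_i)\prec v(\omega_i)$ pointwise but whose averages go the other way. What rescues the argument here is precisely the observation above that both integrands are already in decreasing order, turning pointwise majorization into termwise scalar inequalities that integrate without obstruction.
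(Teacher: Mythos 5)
Your proof is correct and rests on the same key ingredient as the paper's --- the pointwise Bhatia--Karandikar log-majorization $\lambda(G(A_1,\dots,A_m))\preclog G(\lambda(A_1),\dots,\lambda(A_m))$ --- but the route is noticeably cleaner in two respects. The paper first reduces to finitely supported uniform measures via a density-and-continuity argument (invoking Lipschitz continuity of $G$, $\Lambda$, $\lambda$, $\lambda_*$), computes $G(\lambda_*\Lambda(\cdot))$ and $G(\Lambda(\lambda_*\cdot))$ explicitly as iterated finite geometric means, applies Bhatia--Karandikar term by term, and then appeals to the (unexplained) fact that the componentwise geometric mean on $\bR_+^n$ preserves log-majorization. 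You instead use the explicit integral formula $G(\nu)=\exp\int\log y\,d\nu(y)$ on $\Pro^1(\bR_+^n)$ directly, which is valid for \emph{all} integrable measures, so the density step disappears entirely; and you make explicit the step the paper glosses over with ``we easily see'': since both $\log\lambda(G(A_1,\dots,A_m))$ and $\log G(\lambda(A_1),\dots,\lambda(A_m))=\frac1m\sum_j\log\lambda(A_j)$ are already arranged in decreasing order pointwise, the majorization reduces to termwise scalar inequalities that integrate, and the integrated vectors remain decreasing. Your counterexample illustrating why this ordering observation is necessary --- that pointwise majorization does not in general survive integration --- is a genuine clarification of the logic. (Strictly speaking only the \emph{right-hand} integrand needs to be decreasing, since for any $|S|=k$ the pointwise bound $\sum_{i\in S}[\log\lambda(G(A))]_i\le\sum_{i=1}^k[\log G(\lambda(A))]_i$ already integrates; but having both decreasing is the transparent way to see it and is what you argue.) One thing worth stating for completeness is that $\lambda_*\Lambda(\mu_1,\dots,\mu_m)$ and $\Lambda(\lambda_*\mu_1,\dots,\lambda_*\mu_m)$ lie in $\Pro^1(\bR_+^n)$ so the integral formula applies; this follows from the Lipschitz properties of $\lambda_*$ and $\Lambda$ already noted in the paper.
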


\begin{proof}
It remains to prove the second log-majorization. As mentioned above the theorem, note that
$G:\bP_n^m\to\bP_n$ and $\Lambda:(\Pro^1(\bP_n))^m\to\Pro^1(\bP_n)$ are
Lipschitz continuous, as well as so are $\lambda:\bP_n\to\bR_+^n$ and
$\lambda_*:\Pro^1(\bP_n)\to\Pro^1(\bR_+^n)$ (see Example \ref{E:eig}). So it suffices by
continuity to prove the assertion for $\mu_1,\dots,\mu_n\in\Pro_0(\bP_n)$. Let
$\mu_j=(1/k_j)\sum_{i=1}^{k_j}\delta_{A_{ji}}$ for $j=1,\dots,m$. Then
$$
\Lambda(\mu_1,\dots,\mu_m)
={1\over k_1\cdots k_m}\sum_{i_1,\dots,i_m}\delta_{G(A_{1i_1},\dots,A_{mi_m})},
$$
where the sum is taken over all $i_j=1,\dots,k_j$ and $j=1,\dots,m$.
We hence have from (\ref{EE})
$$
\lambda_*\Lambda(\mu_1,\dots,\mu_m)
={1\over k_1\cdots k_m}\sum_{i_1,\dots,i_m}\delta_{\lambda(G(A_{1i_1},\dots,A_{mi_m}))}
$$
so that
\begin{align}\label{F-5.3}
G(\lambda_*\Lambda(\mu_1,\dots,\mu_m))
=G\bigl(\lambda(G(A_{1i_1},\dots,A_{mi_m})):i_1,\dots,i_m\bigr),
\end{align}
where the right-hand side of \eqref{F-5.3} is the geometric mean as an element of
$(\bR_+^n)^{k_1\cdots k_m}$. On the other hand, since
$\lambda_*\mu_j=(1/k_j)\sum_{i=1}^{k_j}\delta_{\lambda(A_{ji})}$, we have
\begin{align}\label{F-5.4}
G(\Lambda(\lambda_*\mu_1,\dots,\lambda_*\mu_m))
=G\bigl(G(\lambda(A_{1i_1}),\dots,\lambda(A_{mi_m})):i_1,\dots,i_m\bigr).
\end{align}
By the log-majorization in \cite[(30)]{BK} (also Theorem \ref{T:M}),
$$
\lambda(G(A_{1i_1},\dots,A_{mi_m}))\preclog
G(\lambda(A_{1i_1}),\dots,\lambda(A_{mi_m}))
$$
for all $i_1,\dots,i_m$. Combining this with \eqref{F-5.3} and \eqref{F-5.4} we easily see
the second log-majorization asserted.
\end{proof}

When $m=1$, since $\lambda(G(\Lambda(\mu)))=\lambda(G(\mu))$ and
$G(\lambda_*\Lambda(\mu))=G(\Lambda(\lambda_*\mu))=G(\lambda_*\mu)$,
\eqref{F-5.2} is included in \eqref{E:int}. When
$\mu_j=\delta_{A_j}$ for $j=1,\dots,m$, since the first two terms of
\eqref{F-5.2} are $\lambda(G(A_1,\dots,A_m))$ from
$\Lambda(\delta_{A_{1}},\dots,\delta_{A_{m}})=\delta_{G(A_{1},\dots,A_{m})}$
and the last term is $G(\lambda(A_1),\dots,\lambda(A_m))$
by (\ref{EE}), \eqref{F-5.2} reduces to \eqref{eigen-log-2}.

For $\mu_1,\dots,\mu_m\in\Pro^1(\bP_{2n})$ the log-majorization in Theorem \ref{T-4.3} gives
$$
\d(G(\Lambda(\mu_1,\dots,\mu_m)))\preclog
G(\d_*\Lambda(\mu_1,\dots,\mu_m)).
$$
The proof of the second log-majorization of \eqref{F-5.5} is similar to that of \eqref{F-5.2}
above by using \cite[(20)]{BJ} (also Theorem \ref{T-4.3}) in place of \cite[(30)]{BK}.

\begin{theorem}\label{P-5.2}
For every $\mu_1,\dots,\mu_m\in\Pro^1(\bP_{2n})$,
\begin{align}\label{F-5.5}
\d(G(\Lambda(\mu_1,\dots,\mu_m)))\preclog G(\d_*\Lambda(\mu_1,\dots,\mu_m))
\preclog G(\Lambda(\d_*\mu_1,\dots,\d_*\mu_m)).
\end{align}
\end{theorem}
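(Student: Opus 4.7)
The plan is to prove the two log-majorizations separately, mirroring the proof of Theorem \ref{P-5.1} with the extended symplectic eigenvalue map $\d$ in place of $\lambda$. For the first log-majorization, set $\nu:=\Lambda(\mu_1,\dots,\mu_m)\in\Pro^1(\bP_{2n})$ and apply Theorem \ref{T-4.3} at $r=1$; this immediately yields $\d(G(\nu))\preclog G(\d_*\nu)$, which is precisely the first asserted inequality.

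For the second log-majorization, I would use a density-and-continuity reduction followed by an index-wise application of \cite[(20)]{BJ}. Approximate each $\mu_j$ by $\mu_j^{(k)}\in\Pro_0(\bP_{2n})$ in the Wasserstein metric $\delta^W$, and note that $\Lambda:(\Pro^1(\bP_{2n}))^m\to\Pro^1(\bP_{2n})$, $\d_*:\Pro^1(\bP_{2n})\to\Pro^1(\bR_+^{2n})$ (from Theorem \ref{T:LL}), and $G:\Pro^1(\bR_+^{2n})\to\bR_+^{2n}$ (the fundamental contraction) are all Lipschitz. Consequently both sides of the desired inequality are continuous in $(\mu_1,\dots,\mu_m)$ for the Wasserstein metric, and since $\preclog$ is closed on positive vectors of fixed length, it suffices to treat $\mu_j=(1/k_j)\sum_{i=1}^{k_j}\delta_{A_{ji}}$. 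Carrying out the same computation as in \eqref{F-5.3}--\eqref{F-5.4} with $\lambda$ replaced by $\d$, the two sides become the uniformly weighted componentwise geometric means, indexed over all $(i_1,\dots,i_m)$, of $\d(G(A_{1i_1},\dots,A_{mi_m}))$ and $G(\d(A_{1i_1}),\dots,\d(A_{mi_m}))$ respectively.

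The index-wise log-majorization
$$\d(G(A_{1i_1},\dots,A_{mi_m}))\preclog G(\d(A_{1i_1}),\dots,\d(A_{mi_m}))$$
is exactly \cite[(20)]{BJ} (equivalently, the uniform finite-support case of Theorem \ref{T-4.3}). To conclude, I would invoke the fact that the componentwise geometric mean of a family of tuples in $\bR_+^{2n}$ preserves $\preclog$ provided every tuple on each side is already arranged in decreasing order; this hypothesis holds here because $\d$ produces decreasing tuples and the componentwise geometric mean of decreasing tuples is again decreasing. After passing to logarithms, this reduces to the statement that ordinary majorization of decreasingly ordered vectors is preserved under convex combinations, which is immediate from the Ky Fan partial-sum characterization of $\prec$ together with equality at the final index. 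The main subtlety I would flag is precisely this decreasing-order requirement, since without it componentwise geometric averaging does not in general preserve $\preclog$; everything else parallels the proof of Theorem \ref{P-5.1} essentially verbatim.
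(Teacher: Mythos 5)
Your proof is correct and follows the paper's own route: the first inequality is Theorem \ref{T-4.3} at $r=1$ applied to $\nu=\Lambda(\mu_1,\dots,\mu_m)$, and the second is obtained exactly as in Theorem \ref{P-5.1} (density reduction to finitely supported uniform measures, the computations \eqref{F-5.3}--\eqref{F-5.4} with $\d$ in place of $\lambda$, and the index-wise log-majorization from \cite[(20)]{BJ}/Theorem \ref{T-4.3}). The extra observation you flag about decreasing order being needed for componentwise geometric averaging to preserve $\preclog$ is a correct and worthwhile detail that the paper leaves implicit.
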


\section{Acknowledgements}

The work of F.~Hiai was supported in part by Grant-in-Aid for
Scientific Research (C)17K05266. The work of Y. Lim was supported by
the National Research Foundation of Korea (NRF) grant funded by the
Korea government (MEST) No.NRF-2015R1A3A2031159 and
2016R1A5A1008055.


\end{document}